\newtheorem{theorem}{Theorem}
\newtheorem{lemma}{Lemma}
\newtheorem{definition}{Definition}
\newtheorem{corollary}{Corollary}
\newtheorem{example}{Example}
\newtheorem{conjecture}{Conjecture}
\newtheorem{proposition}{Proposition}
\newtheorem{computer search}{Computer Search}
\begin{document}

\title{The Slide Dimension of Point Processes}
\author{Bill Ralph}

\address{Mathematics Department\\
Brock University\\
St. Catharines, Ontario\\
Canada L2S 3A1\\
\textnormal{Email: bralph@brocku.ca}\\
\textnormal{Phone: (905) 688-5550 x3804}}

\date{April 16, 2014}
\begin{abstract}
We associate with any finite subset of a metric space an infinite sequence of scale invariant numbers $\rho_1,\rho_2,\dots$ derived from a variant of differential entropy called the genial entropy. As statistics for point processes, these numbers often appear to converge in simulations and we give examples where $1/\rho_1$ converges to the Hausdorff dimension. We use the $\rho_n$ to define a new notion of dimension called the slide dimension for a special class of point processes on metric spaces.  The slide calculus is developed to define $\rho_n$ and an explicit formula is derived for the calculation of $\rho_1$.  For a uniform random variable X on $[0,1]^n$, evidence is given that $\rho_1(X) =1/n$ and $\rho_2(X) =-\pi^2/(6n^2)$ and simulations with a normal variable $Z$ suggest that $\rho_1(Z) =4/\pi$ and $\rho_2(Z) =-1$. Some potential applications to spatial statistics are considered.
\end{abstract}
\keywords{genial entropy, slide calculus, slide statistics, Hausdorff dimension, slide dimension, fractal, spatial statistics, metric space, level statistics, fractal analysis, point process, tangible process}

\subjclass{Primary 60G55, Secondary 28A80}

\maketitle
\section{Introduction}\label{S:intro}
The investigation of many important processes in science and mathematics often yields data in the form of a set of points in a metric space that we must somehow quantify and interpret. For example, the field of fractal analysis has developed in order to obtain dimensional information from a wide range of real world datasets using quantities like the Hausdorf, information and correlation dimensions ~\cite{F,H}.  Even more elaborate techniques have been developed, such as the use of the singularity spectrum in multifractal analysis  ~\cite{GP}, in an effort to deal with the general problem of extracting meaningful information from very complex sets of data.  This paper offers a new approach to this problem by introducing a novel sequence of scale invariant numbers $\rho_1,\rho_2,\dots$ called slide statistics that can be computed from any finite set $U$ of distinct elements in a metric space.  When $U$ is taken to be a larger and larger sample of a random variable $X$, we often observe the values of $\rho_n(U)$ approach intriguing limiting values $\rho_n(X)$. For example, $1/\rho_1$ appears to converge to the Hausdorff dimension for many standard fractals. We will propose a new notion of dimension called the slide dimension for a very specific class of point processes called tangible point processes that satisfy a family of constraints on the $\rho_n$.  As we will see, the numbers $\rho_n$ are just the derivatives of a particular function constructed from the data using a variant of differential entropy we call the genial entropy. 

As a first example, consider the uniform random variable $X$ on $[0,1]^n$ for which it appears that $\rho_1(X) =1/n$ and $\rho_2(X) =-\pi^2/(6n^2)$.  In this case,  $1/\rho_1(X)$ is equal to the dimension of the underlying space which may be useful because $1/\rho_1(X)$ is easier to compute than many traditional measures of dimension.  For a normal variable $Z$, the slide statistics appear to converge to $\rho_1(Z) =4/\pi$ and $\rho_2(Z) =-1$.  This last result suggests a simple goodness of fit test for normality based on the closeness of $\rho_2(U)$ to  $-1$. In both of these examples $\rho_1(X) \geq 1$, and we conjecture this relationship to hold for all continuous real-valued random variables. 

Here is a brief overview of how the values of $\rho_n(U)$ are calculated from $U$. In Section~\ref{S:2}, we introduce a variant of the differential entropy called the genial entropy which is the fundamental idea behind all of our results.  Unlike the differential entropy, the genial entropy is scale invariant and in Section~\ref{S:inequality} we prove it is never negative.  Given a finite set of points $U$ in a metric space, we can find the distance from each point to its nearest neighbour and arrange these distances in descending order so $ d_1 \geq d_2 \geq \dots \geq d_n > 0$. Let $f(x)$ be the function on $[0,1)$ whose value on $[(i-1)/n, i/n)$ is $d_i$. Let $A(t)$ be the area under $(f(x))^t$ and let $\sigma(t)$ denote the genial entropy of the density $\frac{(f(x))^t}{A(t)}$ which happens to be $0$ at $t=0$. In Section~\ref{C:calculus}, we develop the slide calculus which then allows us to define $\rho_n(U)$ as the $n$th derivative from the right at $0$ of the function $\sigma(t)$.  

In Section~\ref{C:calculus}, we derive an explict formula for $\rho_1(U)$ and state a conjectured formula for $\rho_2(U)$. The results obtained from the simulation of $\rho_1$ and $\rho_2$ in a variety of contexts are summarized in Section~\ref{S:slide} and Section~\ref{S:assembly} and we identify many interesting relationships. In Section~\ref{S:level}, we introduce an alternative to the slide statistics.  When defining the slide statistics, we use the functions $(f(x))^t$ which can be thought of as a continuous deformation of $f(x)$ at $t=1$ into the constant function $1$ at $t=0$.  We can achieve the same effect using the functions $tf(x)+(1-t)$ so in Section~\ref{S:level} we set up the corresponding derivatives to obtain the level statistics which turn out to be much easier to calculate but often don't converge. They do however have one advantage over the slide statistics in that points in the sample do not have to be distinct.

\section{The genial Entropy}\label{S:2}
Our starting point for the development of the slide statistics is a variant of differential entropy called the genial entropy or g-entropy which will be described in Definition~\ref{D:2.02}. In the simplest case of a probability density $f$ that has an inverse function, the genial entropy is just the sum of the differential entropies of $f$ and $f^{-1}$.  The next theorem shows how this sum can be written in a form that makes sense for densities that may not have inverses.

\begin{proposition}\label{P:2.01} Suppose $f$ is a continuous function on $[0,b]$ with the properties that the derivative of $f$ exists and is negative on $(0,b)$, $f(b)=0$ and $\int_0^b fdx=1$. Also assume the differential entropies of $f$ and  $f^{-1}$ both exist. Then the sum of their differential entropies is given by $ -\int_0^b fln(f)dx-\int_{0}^{f(0)} f^{-1}ln(f^{-1})dy =  -1-\int_0^b fln(xf)dx$. 
\end{proposition}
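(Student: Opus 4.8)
The plan is to reduce the claimed identity to a single change of variables followed by one integration by parts. First I would expand the right-hand side using $\ln(xf)=\ln x+\ln f$, so that $-1-\int_0^b f\ln(xf)\,dx = -1-\int_0^b f\ln x\,dx-\int_0^b f\ln f\,dx$. Since $-\int_0^b f\ln f\,dx$ is exactly the differential entropy of $f$, which already appears on the left-hand side, the desired equality collapses to the equivalent statement
\[
-\int_0^{f(0)} f^{-1}\ln(f^{-1})\,dy = -1-\int_0^b f\ln x\,dx ,
\]
i.e.\ the differential entropy of $f^{-1}$ equals $-1-\int_0^b f\ln x\,dx$. So the entire content of the proposition is contained in this one reduced identity.

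Next I would prove the reduced identity by substitution. The hypotheses (continuity on $[0,b]$, $f'<0$ on $(0,b)$, $f(b)=0$) make $f$ a strictly decreasing homeomorphism of $[0,b]$ onto $[0,f(0)]$, so $f^{-1}$ is well defined and the substitution $y=f(x)$, $dy=f'(x)\,dx$ is legitimate; it sends $y=0$ to $x=b$ and $y=f(0)$ to $x=0$, with $f^{-1}(y)=x$. This rewrites $\int_0^{f(0)} f^{-1}\ln(f^{-1})\,dy$ as $-\int_0^b x\ln x\,f'(x)\,dx$. Then I would integrate by parts with $u=x\ln x$ and $dv=f'(x)\,dx$, so $du=(\ln x+1)\,dx$ and $v=f(x)$, obtaining $-\bigl[x\ln x\,f(x)\bigr]_0^b+\int_0^b f(x)(\ln x+1)\,dx$. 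The boundary term vanishes at $x=b$ because $f(b)=0$, and at $x\to 0^+$ because $x\ln x\to 0$ while $f$ stays bounded near $0$; using $\int_0^b f\,dx=1$, what remains is $\int_0^b f\ln x\,dx+1$, which is precisely the reduced identity.

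The main obstacle is not the algebra but the analytic bookkeeping at the endpoints: one must verify that all the improper integrals in sight actually converge, so that both the splitting of $\int_0^b f\ln(xf)\,dx$ and the integration by parts are valid. The assumption that the differential entropies of $f$ and $f^{-1}$ exist is exactly what licenses this — it guarantees that $\int_0^b f\ln f\,dx$ and (after the substitution) $\int_0^b x\ln x\,f'(x)\,dx$ are finite, hence that $\int_0^b f\ln x\,dx$ is finite as well. In the write-up I would make these convergence remarks explicit, note the strict monotonicity that makes $f^{-1}$ and the change of variables legitimate, and otherwise the argument is the short computation sketched above.
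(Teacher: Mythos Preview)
Your proposal is correct and follows essentially the same route as the paper: substitute $y=f(x)$ in the entropy integral of $f^{-1}$ and then integrate by parts once, using $f(b)=0$, $x\ln x\to 0$ as $x\to 0^+$, and $\int_0^b f=1$. Your preliminary reduction (expanding $\ln(xf)$ and cancelling the $f$-entropy term) and your endpoint/convergence remarks simply spell out what the paper leaves implicit in the phrase ``and the result follows.''
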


\begin{proof} 
Substitute $y=f(x)$ into  the integral $-\int_{0}^{f(0)} f^{-1}ln(f^{-1})dy$ to get $-\int_{b}^{0} xln(x)f'(x)dx$ which equals $\int_{0}^{b} xln(x)f'(x)dx$. After integrating by parts this last integral becomes $ -1-\int_0^b fln(x)dx$ and the result follows.
\end{proof}
The genial entropy will only be defined for densities of the following special form.
\begin{definition} \label{D:2.01}  A corner density is a function $f:I \rightarrow [0,\infty)$ where $I \cup \{0\}$ is a connected interval contained in $[0,\infty)$, $f$ is monotone decreasing  and $\int_I fdx=1$. 
\end{definition}
The following definition of genial entropy is one of the central ideas of this paper and is motivated by the conclusion of Proposition~\ref{P:2.01}.
\begin{definition} \label{D:2.02} Let $f:I \rightarrow [0,\infty)$ be a corner density. The genial entropy or g-entropy of $f$ is defined by $G(f)= -1-\int_I fln(xf)dx$ when this integral exists, with the usual convention that $0 \ln (0) = 0$. 
\end{definition}
If $f$ satisfies the conditions of Proposition~\ref{P:2.01}, then $G(f) = G(f^{-1})$ so in particular $-\ln(x)$ and $e^{-x}$ must have the same genial entropy which happens to be Euler's constant as shown in Table~\ref{T:2.01}. In Section~\ref{S:inequality}, we will prove the genial entropy is always nonnegative.

\begin{table}[!htb]{The genial entropy of some corner densities.}
\begin{center}

\begin{tabular}{|c|c|c|}
 \hline
 \textit{Density} & \textit{Domain} &   \textit{Genial Entropy}  \\
\hline
$1/b$ & $[0,b]$ & $0$   \\
\hline
$-\ln(x)$  & $(0,1]$ & $\gamma$ \\
\hline
$e^{-x}$  & $[0,\infty)$ & $\gamma$   \\
\hline
$a/(x^{1-a})$ for $a \in (0,1)$  & $(0,1]$ & $-\ln(a)$  \\
\hline
$2e^{-x^2}/\sqrt{\pi}$  & $[0,\infty)$ & $(-1+\gamma+\ln(\pi))/2 $   \\
\hline
$2/(\pi(1+x^2))$  & $[0,\infty)$ & $-1+\ln(2)+\ln(\pi) $  \\
\hline

\end{tabular}
\caption{ }\label{T:2.01}
\end{center}
\end{table}
Unlike the differential entropy, the genial entropy is invariant under changes of scale.

\begin{theorem}\label{T:2.02} Let $f:I \rightarrow [0,\infty)$ be any corner density.  Then for any $\lambda >0$ the function $h:\lambda I \rightarrow [0,\infty)$ defined by $h(z)= \frac{1}{\lambda} f(\frac{z}{\lambda})$ for $z \in \lambda I$ is a corner density with the same genial entropy as $f$.
\end{theorem}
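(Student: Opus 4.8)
The plan is to verify directly that $h$ satisfies the three requirements of Definition~\ref{D:2.01}, and then to evaluate $G(h)$ by the single substitution $z = \lambda x$, which is precisely the change of variables that makes the factors of $\lambda$ cancel.

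First I would check that $h$ is a corner density. Since $1/\lambda > 0$ and $f$ takes values in $[0,\infty)$, so does $h$. Because $I \cup \{0\}$ is a connected interval in $[0,\infty)$ and multiplication by $\lambda > 0$ is an increasing homeomorphism of $[0,\infty)$ onto itself, the set $\lambda I \cup \{0\} = \lambda(I \cup \{0\})$ is again a connected interval in $[0,\infty)$. Monotonicity is immediate: if $z_1 < z_2$ in $\lambda I$, then $z_1/\lambda < z_2/\lambda$ in $I$, so $f(z_1/\lambda) \geq f(z_2/\lambda)$ and hence $h(z_1) \geq h(z_2)$. For the normalization, the substitution $x = z/\lambda$, $dz = \lambda\,dx$ gives $\int_{\lambda I} h(z)\,dz = \int_{\lambda I} \frac{1}{\lambda} f(z/\lambda)\,dz = \int_I f(x)\,dx = 1$.

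Next I would compute the genial entropy. Under the same substitution $z = \lambda x$ we have $h(z) = \frac{1}{\lambda} f(x)$, so the key observation is that $z\,h(z) = (\lambda x)\bigl(\frac{1}{\lambda} f(x)\bigr) = x f(x)$; thus the argument of the logarithm is left unchanged. Combining this with $h(z)\,dz = f(x)\,dx$ from the previous step, we get $\int_{\lambda I} h(z)\ln\bigl(z\,h(z)\bigr)\,dz = \int_I f(x)\ln\bigl(x f(x)\bigr)\,dx$, the integrand being transported exactly (with the convention $0\ln 0 = 0$ respected on both sides), so the left-hand integral exists precisely when the right-hand one does. Therefore $G(h) = -1 - \int_{\lambda I} h\ln(zh)\,dz = -1 - \int_I f\ln(xf)\,dx = G(f)$.

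There is essentially no serious obstacle here; the only point requiring a little care is the bookkeeping for the domain of integration when $I$ is half-open or unbounded, together with the remark that the substitution is an orientation-preserving bijection of $\lambda I$ onto $I$, so no sign issues arise and the equivalence of existence of the two integrals is genuine. The heart of the matter is the identity $z\,h(z) = x f(x)$, which is exactly why the genial entropy is scale invariant, whereas the ordinary differential entropy would instead pick up an additive term $-\ln\lambda$.
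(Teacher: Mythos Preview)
Your proof is correct and follows exactly the paper's approach: the substitution $z=\lambda x$ together with the identity $z\,h(z)=x f(x)$. You are simply more explicit than the paper in verifying that $h$ is a corner density and that the two integrals exist simultaneously.
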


\begin{proof} 

$G(h) = -1-\int_{\lambda I} hln(zh)dz$ $=$ $-1-\int_{\lambda I} \frac{1}{\lambda} f(\frac{z}{\lambda})ln(z \frac{1}{\lambda} f(\frac{z}{\lambda}))dz$. After substituting $z=\lambda x$, this last integral becomes $-1-\int_I fln(xf)dx = G(f)$.
\end{proof}

In Section~\ref{S:slide}, the following functions will be used to associate a genial entropy with sample data.

\begin{definition}\label{D:2.03} Let $D$ be any sequence $d_1 , d_2 , \dots , d_n $ with $ d_1 \geq d_2 \geq \dots \geq d_n > 0$ and let $D^*$ be the sequence $d_1/\mu , d_2/\mu , \dots , d_n/\mu $ where $\mu$ is the mean of the $d_i$.

\begin{enumerate}
\item Define $f_D:[0,1)\rightarrow[0,\infty)$ to have the value $d_i$ on the interval $[(i-1)/n, i/n)$. In particular, $f_{D^*}$ is the corner density whose value on $[(i-1)/n, i/n)$ is $d_i/\mu$.  
\item Define $L_{D}:[0,\infty)\rightarrow[0,1]$ by $L_{D}(y)= m/n$ where $m$ is the number of elements of $D$ that are less than or equal to $y$. In other words, $L_{D}$ is the restriction of the empirical cumulative distribution function for the data in $D$ to the interval $[0,\infty)$. 
\end{enumerate}
\end{definition}

Part (1) of the next theorem shows $f_D$ is a generalized inverse ~\cite{KMP} of $1-L_D$ that is closely related to the usual quantile function.  When working with sample data, it will be easiest to calculate the genial entropy using $f_{D^*}$ but it can also be calculated using the more familiar empirical cumulative distribution function as shown by part (2) of Theorem~\ref{T:2.03} which is a variation on Proposition~\ref{P:2.01}.

\begin{theorem}\label{T:2.03} Let $D$ be any sequence $d_1 , d_2 , \dots , d_n $ with $ d_1 \geq d_2 \geq \dots \geq d_n > 0$. Then
\begin{enumerate}
\item $f_D(x)=inf \{y \geq 0 | 1 - L_D(y) \leq x \}$ for $x \in [0,1)$
\item  $f_{D^*}$ and $1-L_{D^*}$ are corner densities with the same genial entropy. 
\end{enumerate}
\end{theorem}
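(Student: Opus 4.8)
The plan is to treat the two parts separately: part~(1) by a direct counting argument, and part~(2) by computing both genial entropies explicitly and observing that they coincide, in the spirit of Proposition~\ref{P:2.01} but without invoking its differentiability hypotheses.

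For part~(1), fix $x\in[0,1)$ and let $i\in\{1,\dots,n\}$ be the unique index with $x\in[(i-1)/n,\,i/n)$, so that $f_D(x)=d_i$ by definition. The condition $1-L_D(y)\le x$ is equivalent to $L_D(y)\ge 1-x$, i.e.\ to $\#\{j:d_j\le y\}\ge n(1-x)$; since the left-hand side is an integer and $n(1-x)\in(n-i,\,n-i+1]$, this holds precisely when $\#\{j:d_j\le y\}\ge n-i+1$. Because $d_1\ge\cdots\ge d_n$, the set $\{j:d_j\le y\}$ is a terminal block $\{k,\dots,n\}$ of indices, so it has at least $n-i+1$ elements exactly when it contains the index $i$, i.e.\ exactly when $d_i\le y$. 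Hence $\{y\ge 0:1-L_D(y)\le x\}=[d_i,\infty)$, whose infimum is $d_i=f_D(x)$.

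For part~(2) I would set $c_i=d_i/\mu$, so $c_1\ge\cdots\ge c_n>0$ and $\sum_i c_i=n$. That $f_{D^*}$ is a corner density is immediate from monotonicity and $\int_0^1 f_{D^*}\,dx=\tfrac1n\sum_i c_i=1$. For $g:=1-L_{D^*}$ I would use the identity $g(y)=\tfrac1n\#\{j:c_j>y\}$, which on the one hand yields the layer-cake formula $\int_0^\infty g(y)h(y)\,dy=\tfrac1n\sum_j\int_0^{c_j}h(y)\,dy$ (taking $h\equiv 1$ gives $\int_0^\infty g=1$, so $g$ is a corner density on $[0,\infty)$), and on the other hand shows that $g$ equals the constant $i/n$ on $[c_{i+1},c_i)$, where $c_0:=+\infty$ and $c_{n+1}:=0$ and where ties among the $d_i$ merely produce empty sub-intervals. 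Writing $a_i:=\tfrac{i}{n}\ln\tfrac{i}{n}$ with the convention $a_0=0$, I would then evaluate $G(f_{D^*})=-1-\int_0^1 f_{D^*}\ln f_{D^*}-\int_0^1 f_{D^*}\ln x\,dx$ by integrating over each $[(i-1)/n,\,i/n)$, and $G(g)=-1-\int_0^\infty g\ln g-\int_0^\infty g\ln y\,dy$ using the layer-cake formula for the $\ln y$ term and the step description for the $\ln g$ term; an Abel summation (using $c_{n+1}=0$ and $a_0=0$) converts $\int_0^\infty g\ln g\,dy=\sum_i a_i(c_i-c_{i+1})$ into $\sum_i(a_i-a_{i-1})c_i$. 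Both expressions then reduce to $-\tfrac1n\sum_i c_i\ln c_i-\sum_i(a_i-a_{i-1})c_i$, so $G(f_{D^*})=G(1-L_{D^*})$.

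The calculation is elementary and all integrals involved are finite (with at worst an integrable $\ln$-singularity at the left endpoint), so the only real difficulty is bookkeeping: keeping the two endpoint conventions $0\ln 0=0$ (at $x=0$ and at $c_{n+1}=0$) consistent, getting the Abel-summation indices right, and checking that repeated values among the $d_i$ cause no trouble. As an alternative to the explicit computation, one could instead invoke part~(1) to identify $f_{D^*}$ and $g$ as mutual generalized inverses and rerun the substitution-and-integration-by-parts argument of Proposition~\ref{P:2.01} in the Riemann--Stieltjes setting, since $L_{D^*}$ increases by jumps of size $1/n$; but the direct calculation seems cleaner and avoids justifying a change of variables through a step function.
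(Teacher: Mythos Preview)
Your proof is correct and follows the same overall strategy as the paper: for part~(1) a direct counting argument on the step function, and for part~(2) a direct computation of both genial entropies from the step structure, with an index-shift (Abel summation) at the crucial point. The organization differs in two respects worth noting. First, the paper begins by collapsing repeated $d_i$ into distinct values $e_1>\cdots>e_m$ with jump locations $t_0<\cdots<t_m$, whereas you keep all $n$ values and simply let ties produce empty subintervals; your way avoids the extra notation at the cost of a brief remark about ties. Second, the paper keeps the integrand $f\ln(xf)$ intact and shows by a single reindexing that $\sum_i\bigl(t_{i-1}e_i\ln(t_{i-1}e_i)-t_ie_i\ln(t_ie_i)\bigr)=\sum_i\bigl(e_{i+1}t_i\ln(e_{i+1}t_i)-e_it_i\ln(e_it_i)\bigr)$, which is exactly the swap $f_{D^*}\leftrightarrow 1-L_{D^*}$; you instead split $\ln(xf)=\ln x+\ln f$, handle the $\ln y$ piece for $g=1-L_{D^*}$ via the layer-cake identity $\int_0^\infty g\,h\,dy=\tfrac1n\sum_j\int_0^{c_j}h$, and then Abel-sum the $\ln g$ piece. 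Your layer-cake device is a clean way to verify that $g$ is a corner density and to compute $\int g\ln y$ in one stroke, while the paper's unsplit version makes the $f\leftrightarrow f^{-1}$ symmetry of Proposition~\ref{P:2.01} more visible. Either organization works; the underlying telescoping is the same.
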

\begin{proof} 

(1)	Suppose $x \in [(i-1)/n, i/n)$ for some $i$.  Then $\inf \{y \geq 0 | 1 - L_D(y) \leq x \}= \inf \{y \geq 0 | 1 - L_D(y) < i/n \}= \inf \{y \geq 0 | L_D(y) > 1-i/n \} = \inf \{ d_1, d_2, \dots , d_i \}= d_i = f_D(x)$.

(2)	The sequence $D$ may contain repetitions, so assume that the discontinuities of $f_{D^*}$ in $(0,1)$ occur at $t_i$ with $t_1 < t_2 \dots < t_{m-1}$ and let $t_0=0$ and $t_m=1$. Suppose the value of $f_{D^*}$ on $[t_{i-1},t_i)$ is $e_i$ for $i=1,2,\dots,m$ and let $e_{m+1}=0$ so $ e_1 > e_2 > \dots > e_m > e_{m+1}=0$. We can then describe $1-L_{D^*}$ as the function that takes the value $t_i$ on $[e_{i+1},e_i)$ for $i=1,2,\dots,m$ and the value $0$ on $[e_1,\infty)$. Since $\int_0^1 f_{D^*}(x)dx=\sum_{i=1}^n (d_i/\mu )(i/n-(i-1)/n) =1$, we must also have that $\sum_{i=1}^m e_i(t_i-t_{i-1})=1$. Rearranging this last sum gives $\sum_{i=1}^m t_i(e_i-e_{i+1})=1$ so $\int_0^\infty 1-L_{D^*}(y)dy=1$ and $1-L_{D^*}$ is a corner density.

To see that  $f_{D^*}$ and $1-L_{D^*}$ have the same genial entropy, we use $t_0=0$, $e_{m+1}=0$ and the convention $0 \ln (0) = 0$ to obtain:

\begin{equation}
\begin{split}
-1-\int_0^1 f_{D^*}(x)ln(xf_{D^*}(x))dx
& =-1-\sum_{i=1}^m (\int_{t_{i-1}}^{t_i} e_i ln(x e_i)dx)\\
& =\sum_{i=1}^m ( t_{i-1} e_i  ln(t_{i-1} e_i )-t_i e_i  ln(t_i e_i ))\\
& =\sum_{i=1}^m ( e_{i+1} t_i ln(e_{i+1} t_i))-e_i t_i ln(e_i t_i) \\
& =-1-\sum_{i=1}^m (\int_{e_{i+1}}^{e_i} t_i ln(y t_i)dy)\\
& =-1-\int_0^{\infty} (1-L_{D^*}(y))ln(y(1-L_{D^*}(y)))dy
\end{split}
\end{equation}
\end{proof}

In view of Theorem~\ref{T:2.02}, we could have defined $f_{D^*}$ on any interval $[0,b)$ and obtained a function with the same genial entropy.    The interval $[0,1)$ was chosen in particular to insure the relationship between the genial entropies of  $f_{D^*}$ and $1-L_{D^*}$ stated in part (2) of Theorem~\ref{T:2.03}. We now show that the genial entropy is never negative.

\section{The genial entropy Inequality}\label{S:inequality}
Our goal in this section is to prove that the genial entropy of a corner density can never be negative. The idea is to first prove the necessary inequalities for step functions and then use the fact that monotone functions can be uniformly approximated by step functions.  We begin with an inequality for step functions.

\begin{lemma}\label{L:3.01} Suppose that $y_1 \geq y_2 \geq \dots \geq y_n \geq 0$ and $ 0 =t_0 \leq t_1 \leq t_2 \leq \dots \leq t_n$. Let $A = \sum_{i=1}^n y_i (t_i-t_{i-1})$. Then  $\sum_{i=1}^n (t_{i-1} y_i ln(t_{i-1} y_i)-t_i y_i ln(t_i y_i)) \geq -AlnA $
\end{lemma}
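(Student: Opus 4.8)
The plan is to reduce the whole inequality to a single termwise estimate coming from the convexity of $g(u) := u\ln u$ (extended by $g(0) = 0$), which is convex on $[0,\infty)$ since $g''(u) = 1/u > 0$. First I would introduce the partial sums
\[
F_i \;=\; \sum_{k=1}^{i}(t_k - t_{k-1})\,y_k \qquad (i = 0,1,\dots,n),
\]
so that $F_0 = 0$ and $F_n = A$; these are the ``prefix areas'' of the decreasing step function that equals $y_i$ on $[t_{i-1},t_i)$. Because $t_0 = 0$ we have $t_i = \sum_{k=1}^{i}(t_k - t_{k-1})$, and because $y_1 \geq \dots \geq y_n \geq 0$ the bound $y_k \geq y_i$ holds for every $k \leq i$; multiplying by $t_k - t_{k-1} \geq 0$ and summing over the appropriate ranges gives the two domination inequalities
\[
t_{i-1}\,y_i \;\leq\; F_{i-1}, \qquad t_i\,y_i \;\leq\; F_i ,
\]
together with the identity $t_i y_i - t_{i-1}y_i = (t_i - t_{i-1})y_i = F_i - F_{i-1}$.

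The second ingredient is that for a convex $g$ on $[0,\infty)$ the increment $g(x+\delta) - g(x)$ is nondecreasing in $x \geq 0$ for each fixed $\delta \geq 0$ (for $g(u) = u\ln u$ this follows from $g(x+\delta) - g(x) = \int_x^{x+\delta} g'$ and the monotonicity of $g'$, and it remains valid at $x = 0$ since $g$ is continuous there and $g'$ is integrable near $0$). Applying this with $x = t_{i-1}y_i \leq F_{i-1}$ and $\delta = t_i y_i - t_{i-1}y_i = F_i - F_{i-1}$ gives, for each $i$,
\[
g(t_i y_i) - g(t_{i-1}y_i) \;\leq\; g(F_i) - g(F_{i-1}).
\]
Summing over $i = 1,\dots,n$ telescopes the right-hand side to $g(F_n) - g(F_0) = g(A) - g(0) = A\ln A$, whence
\[
\sum_{i=1}^{n}\bigl(t_{i-1}y_i\ln(t_{i-1}y_i) - t_i y_i\ln(t_i y_i)\bigr) = -\sum_{i=1}^{n}\bigl(g(t_i y_i) - g(t_{i-1}y_i)\bigr) \;\geq\; -A\ln A ,
\]
which is exactly the assertion. (If $A = 0$ then $y_i = 0$ whenever $t_{i-1} < t_i$, both sides vanish, and there is nothing to prove; otherwise $A > 0$ and $A\ln A$ is a finite number.)

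The only genuinely substantive points are the two ingredients just used: that each product $t_i y_i$ is dominated by the corresponding prefix area $F_i$ — an immediate consequence of the monotonicity of the $y_i$ — and that convexity of $u\ln u$ lets one bound $\sum_i\bigl(g(t_iy_i) - g(t_{i-1}y_i)\bigr)$ by the telescoping sum $\sum_i\bigl(g(F_i) - g(F_{i-1})\bigr)$. Everything else is bookkeeping, the only mild care being the convention $0\ln 0 = 0$ and the degenerate cases $y_i = 0$, $t_{i-1} = t_i$, or $t_{i-1} = 0$ (where the increment inequality is invoked with left endpoint $0$). An essentially equivalent route passes to the step function $f$ with $f \equiv y_i$ on $[t_{i-1},t_i)$, observes that the sum equals $-A - \int_0^{t_n} f(s)\ln\!\bigl(sf(s)\bigr)\,ds$, uses $sf(s) \leq \int_0^s f = F(s)$, and computes $\int_0^{t_n} F'(s)\ln F(s)\,ds = \bigl[F\ln F - F\bigr]_0^{t_n} = A\ln A - A$; but the discrete argument above avoids calculus altogether, which is why I would prefer it. In either form equality holds precisely when $f$ is constant on its support, i.e.\ when $y_1 = \dots = y_j$ and $y_{j+1} = \dots = y_n = 0$ for some $j$.
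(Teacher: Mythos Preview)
Your argument is correct. The paper proves this lemma differently: it shows that the vector $(A,\,y_2 t_1,\,y_3 t_2,\,\dots,\,y_n t_{n-1})$ majorizes $(y_1 t_1,\,y_2 t_2,\,\dots,\,y_n t_n)$ by exhibiting an explicit sequence of Robin Hood transfers, and then invokes Karamata's inequality for the convex function $x\ln x$. Your route is more elementary in that it avoids the majorization machinery entirely; you only use the single fact that $g(x+\delta)-g(x)$ is nondecreasing in $x$ for convex $g$, together with the domination $t_{i-1}y_i\le F_{i-1}$ and the telescoping of $g(F_i)-g(F_{i-1})$. What the paper's approach buys is a clean structural statement (the majorization itself) that could in principle be reused with any convex function; what your approach buys is a self-contained proof requiring no outside inequality, plus the transparent continuous reformulation via $sf(s)\le\int_0^s f$ that you sketch at the end, which dovetails nicely with how the lemma is actually applied in the subsequent limiting arguments (Lemmas~\ref{L:3.02}--\ref{L:3.05}).
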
 
\begin{proof} Given vectors $a,b \in R^n$, recall from ~\cite{MO} that $a$ majorizes $b$ provided \\ $\sum_{i=1}^k a_{(i)} \geq \sum_{i=1}^k b_{(i)}$ for all $k = 1 \dots (n-1)$ with equality for $k=n$ and where the components of $a$ and $b$ have been sorted in descending order so $a_{(1)} \geq a_{(2)} \geq \dots \geq a_{(n)}$ and $b_{(1)} \geq b_{(2)} \geq \dots \geq b_{(n)}$.  Alternatively, $a$ majorizes $b$ if vector $b$ can be obtained from vector $a$ by a sequence of "transfers" that allow us to change a vector $a = (a_1,a_2, \dots,a_i, \dots,a_j, \dots, a_n)$ into a vector $b = (a_1,a_2, \dots,a_i+\Delta, \dots,a_j -\Delta, \dots, a_n)$ provided $a_i \leq a_j$ and $\Delta \leq a_j-a_i$. 

We now show that the vector $(A, y_2 t_1, y_3 t_2, \dots , y_n t_{n-1})$ majorizes \\ $(y_1 t_1, y_2 t_2, \dots , y_n t_n)$ so the required inequality follows immediately from Karamata's Inequality ~\cite{MO} and the convexity of $x \ln x $ on $[0,\infty)$.  Since $A = y_n (t_n-t_{n-1}) +\sum_{i=1}^{n-1} y_i (t_i-t_{i-1})$ and  $ \sum_{i=1}^{n-1} y_i (t_i-t_{i-1})  \geq y_n\sum_{i=1}^{n-1}  (t_i-t_{i-1}) = y_n t_{n-1}$, we can transfer an amount $\Delta =y_n (t_n-t_{n-1})$ from the first to the last entry of $V_1=(A, y_2 t_1, y_3 t_2, \dots , y_n t_{n-1})$ which means that $V_1$ majorizes $V_2$ $=$  $(A-\Delta, y_2 t_1, y_3 t_2, \dots , y_n t_{n-1} +\Delta)$ $ =$ $ (\sum_{i=1}^{n-1} y_i (t_i-t_{i-1}), y_2 t_1, y_3 t_2, \dots , y_n t_n) $. Now transfer an amount $\Delta =y_{n-1} (t_{n-1}-t_{n-2})$ from the first to the 2nd last entry of $V_2$ and continue similarly with the other entries to obtain the required majorization. 
\end{proof}

\begin{lemma}\label{L:3.02} Suppose that $y_1 \geq y_2 \geq \dots \geq y_n > 0$ and $ 0 =t_0 < a = t_1 \leq t_2 \leq \dots \leq t_n =b$. Let $f$ be the function whose value is $y_i$ on $[t_{i-1},t_i)$ and let $P = \int_0^a f = y_1a$ and $Q = \int_a^b f$.  Then $-\int_a^b f(1+ln(xf))dx\geq PlnP -(P+Q)ln(P+Q)$
\end{lemma}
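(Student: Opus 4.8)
The plan is to rewrite the left-hand integral as a finite sum and then recognize that sum as all but one term of the inequality supplied by Lemma~\ref{L:3.01}, applied to the \emph{entire} partition $0 = t_0 \le t_1 \le \dots \le t_n = b$ rather than just to the nodes in $[a,b]$.

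First I would evaluate the integral explicitly. Since $\frac{d}{dx}\bigl(x\ln(xy)\bigr) = 1 + \ln(xy)$ for a positive constant $y$, on each subinterval $[t_{i-1},t_i)$ with $i \ge 2$ one has $\int_{t_{i-1}}^{t_i} y_i\bigl(1+\ln(xy_i)\bigr)\,dx = t_i y_i\ln(t_i y_i) - t_{i-1}y_i\ln(t_{i-1}y_i)$. Because $f \equiv y_i$ on $[t_{i-1},t_i)$ while $a = t_1$ and $b = t_n$, summing over $i = 2,\dots,n$ gives
\[
-\int_a^b f\bigl(1+\ln(xf)\bigr)\,dx \;=\; \sum_{i=2}^n \bigl(t_{i-1}y_i\ln(t_{i-1}y_i) - t_i y_i\ln(t_i y_i)\bigr).
\]

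Next I would apply Lemma~\ref{L:3.01} to the same data $y_1 \ge y_2 \ge \dots \ge y_n > 0$ and $0 = t_0 \le t_1 \le \dots \le t_n$. The quantity called $A$ there is $\sum_{i=1}^n y_i(t_i-t_{i-1}) = y_1 a + Q = P+Q$, so the lemma reads
\[
\sum_{i=1}^n \bigl(t_{i-1}y_i\ln(t_{i-1}y_i) - t_i y_i\ln(t_i y_i)\bigr) \;\ge\; -(P+Q)\ln(P+Q).
\]
The $i=1$ summand is $t_0 y_1\ln(t_0 y_1) - t_1 y_1\ln(t_1 y_1)$, which by the convention $0\ln 0 = 0$ together with $t_1 = a$ and $ay_1 = P$ equals $-P\ln P$. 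Moving this term to the right-hand side and substituting the integral identity from the previous paragraph yields precisely $-\int_a^b f(1+\ln(xf))\,dx \ge P\ln P - (P+Q)\ln(P+Q)$, which is the claim.

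I do not expect any genuine obstacle here: the argument is essentially bookkeeping once one observes that Lemma~\ref{L:3.01}, applied to the full partition and with its first term peeled off, already contains the estimate. The only points requiring a moment's care are the boundary behaviour at $t_0 = 0$, where the antiderivative identity persists under the $0\ln 0 = 0$ convention, and the harmless case of coinciding consecutive nodes $t_{i-1}=t_i$, for which both the corresponding integral contribution and the corresponding summand vanish, consistently with the hypotheses of Lemma~\ref{L:3.01}.
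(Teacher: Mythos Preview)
Your proposal is correct and is essentially the same argument as the paper's: both evaluate the integral piecewise using the antiderivative $x\ln(xy)$, isolate the $i=1$ contribution as $-P\ln P$ (the paper does this by splitting off $\int_0^a$, you by peeling off the first summand), and then invoke Lemma~\ref{L:3.01} with $A=P+Q$. The only difference is cosmetic bookkeeping.
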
 
\begin{proof} Let $g$ be the constant function whose value is $C > 0$ on $[u,v]$ where $0\leq u<v$.  Then $-\int_u^v g(1+ln(xg))dx = (Cu)ln(Cu)-(Cv) ln(Cv)$ with the usual convention that $0 \ln 0 =0$ in the case when $u=0$. Then $-\int_a^b f(1+ln(xf))dx = -\int_0^b f(1+ln(xf))dx +\int_0^a f(1+ln(xf))dx = \sum_{i=1}^n (y_i t_{i-1}  ln( y_i t_{i-1} )- y_i t_i ln(y_i t_i))+PlnP \geq -(P+Q)ln(P+Q) + PlnP $, after applying Lemma~\ref{L:3.01} with $A=P+Q$.

\end{proof} 

\begin{lemma}\label{L:3.03} Let $f$ be a positive monotone decreasing function on $[a,b]$ where $0 <  a < b$ . Let $P = af(a)$ and $Q = \int_a^b f dx$.  Then $-\int_a^b f(1+ln(xf))dx\geq PlnP -(P+Q)ln(P+Q)$.
\end{lemma}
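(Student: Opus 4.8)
The plan is to deduce the general statement from the step-function version already established in Lemma~\ref{L:3.02}, by approximating $f$ from above by step functions and passing to a limit. First I would fix $n$, take the uniform partition $a = s_0 < s_1 < \dots < s_n = b$ with $s_j = a + j(b-a)/n$, and let $f_n$ be the step function equal to $f(s_{j-1})$ on $[s_{j-1},s_j)$ (and to $f(b)$ at $b$). Since $f$ is positive and monotone decreasing, each $f_n$ is a positive decreasing step function with $f(b) \leq f_n \leq f(a)$ and $f_n \geq f$ pointwise, and, crucially, $f_n(a) = f(a)$, so the quantity $P = af(a)$ of Lemma~\ref{L:3.03} is exactly the ``$P$'' attached to $f_n$. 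Extending $f_n$ to $[0,b)$ by the constant value $f(a)$ on $[0,a)$ puts it in the precise form required by Lemma~\ref{L:3.02}: take $t_0 = 0$, $t_1 = a$, $t_{j+1} = s_j$ for $j \geq 1$, and $y_1 = y_2 = f(a) \geq y_3 = f(s_1) \geq \cdots > 0$. That lemma then gives
\[
-\int_a^b f_n\bigl(1+\ln(xf_n)\bigr)\,dx \;\geq\; P\ln P - (P+Q_n)\ln(P+Q_n), \qquad Q_n := \int_a^b f_n\,dx .
\]

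The second step is to let $n \to \infty$. A monotone function is continuous off a countable set, and at any continuity point $x$ the left endpoint $s_{j-1}$ of the subinterval containing $x$ satisfies $x - (b-a)/n < s_{j-1} \leq x$, so $f_n(x) = f(s_{j-1}) \to f(x)$; hence $f_n \to f$ almost everywhere on $[a,b]$. On $[a,b]$ the product $x f_n(x)$ stays in the fixed compact interval $[af(b),\,bf(a)] \subset (0,\infty)$, so the integrands $f_n(1+\ln(xf_n))$ are bounded, uniformly in both $n$ and $x$, by a constant (which is integrable on the bounded interval $[a,b]$). Bounded convergence then yields $\int_a^b f_n(1+\ln(xf_n))\,dx \to \int_a^b f(1+\ln(xf))\,dx$ and $Q_n \to Q$. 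Since $u \mapsto P\ln P - (P+u)\ln(P+u)$ is continuous, the displayed inequality survives the limit and becomes exactly the asserted bound.

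I do not expect a genuine obstacle here. The only points needing a little care are that $f$ may be discontinuous, so the step functions converge only almost everywhere rather than uniformly — which is why I invoke bounded convergence instead of a uniform-approximation estimate — and the minor bookkeeping of letting the prepended interval $[0,a)$ and the first genuine subinterval $[a,s_1)$ share the common value $f(a)$, which is permitted by the non-strict chain $y_1 \geq y_2 \geq \cdots$ in Lemma~\ref{L:3.02}. Positivity of $f$, in particular $f(b)>0$, is precisely what keeps $x f_n$ bounded away from $0$ so that all the logarithmic terms remain bounded throughout the limiting process.
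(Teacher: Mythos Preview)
Your argument is correct and follows essentially the same route as the paper: approximate $f$ by monotone decreasing step functions, apply Lemma~\ref{L:3.02}, and pass to the limit. The only technical difference is that the paper invokes a uniform step-function approximation (citing \cite{SH}) and hence uniform convergence of the integrands, whereas you build the step functions explicitly from a partition of the domain, obtain only a.e.\ convergence, and compensate with the bounded convergence theorem---both executions are sound, and your choice to arrange $f_n(a)=f(a)$ so that $P_n=P$ exactly is a nice simplification.
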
 
\begin{proof} By  ~\cite{SH}, there exists a sequence of monotone decreasing step functions $f_k$ that converge uniformly to $f$ with $0 < f(b) \leq f_k(x) \leq f(a)$ for every $k$ and every $x \in [a,b]$. By standard results, the sequence $ f_k(1+ln(xf_k)$ converges uniformly to  $ f(1+ln(xf)$ on $[a,b]$. Let $P_k = af_k(a)$ and  $Q_k  = \int_a^b f_k dx$.  By  Lemma~\ref{L:3.02}, $-\int_a^b f_k(1+ln(xf_k))dx\geq P_klnP_k -(P_k+Q_k)ln(P_k+Q_k)$ and the result now follows by taking the limit as $k \rightarrow \infty$ of both sides of this inequality.
\end{proof}

\begin{lemma}\label{L:3.04} Let $f$ be a positive monotone decreasing function on $(0,b]$ for which $Q = \int_0^b f dx$ is finite.  Then $-\int_0^b f(1+ln(xf))dx\geq  -QlnQ$.
\end{lemma}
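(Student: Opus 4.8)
The plan is to deduce Lemma~\ref{L:3.04} from Lemma~\ref{L:3.03} by letting the left endpoint shrink to $0$. Fix $a$ with $0 < a < b$ and apply Lemma~\ref{L:3.03} to the restriction $f|_{[a,b]}$, which is still positive and monotone decreasing; writing $P_a = a f(a)$ and $Q_a = \int_a^b f\,dx$ this gives
\[ -\int_a^b f(1+\ln(xf))\,dx \;\geq\; P_a\ln P_a - (P_a+Q_a)\ln(P_a+Q_a). \]
Everything then reduces to understanding the three quantities $P_a$, $Q_a$, and $\int_a^b f(1+\ln(xf))\,dx$ as $a \downarrow 0$.

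For $Q_a$, monotone convergence (the integrand is nonnegative) gives $Q_a \to Q$ as $a\to 0^+$, and $Q \in (0,\infty)$ since $f$ is positive with $Q$ finite. For $P_a$, monotonicity of $f$ gives $a f(a) = \int_0^a f(a)\,dx \leq \int_0^a f\,dx = Q - Q_a \to 0$, so $P_a \to 0$. Hence, by continuity of $t\mapsto t\ln t$ on $[0,\infty)$ with the convention $0\ln 0 = 0$, the right-hand side of the displayed inequality tends to $0 - Q\ln Q = -Q\ln Q$.

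It remains to pass to the limit on the left. For each fixed $a>0$ the function $f|_{[a,b]}$ is bounded (between $f(b)>0$ and $f(a)<\infty$), so $f(1+\ln(xf)) = f + f\ln x + f\ln f$ is a product of bounded monotone functions and is integrable on $[a,b]$; thus $\int_0^b f(1+\ln(xf))\,dx$ is an improper integral with its only potential singularity at $0$. Taking $a\to 0^+$ in the displayed inequality yields $\limsup_{a\to 0^+}\int_a^b f(1+\ln(xf))\,dx \leq Q\ln Q$, so whenever this improper integral exists (the standing convention, as in Definition~\ref{D:2.02}) we get $-\int_0^b f(1+\ln(xf))\,dx \geq -Q\ln Q$, as claimed. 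Note that for a corner density on $(0,b]$ one has $Q=1$, so this is precisely the nonnegativity of the genial entropy in that case.

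The one genuinely delicate point is this last step: Lemma~\ref{L:3.03} only controls $\int_a^b f(1+\ln(xf))\,dx$ from above, and a priori the integrand $f\ln(xf)$ need not have constant sign near $0$ (with $f$ possibly unbounded there), so convergence of the improper integral is not automatic. I expect this to be the main obstacle. It is dissolved by the convention that the quantity is only asserted when the integral exists; alternatively one may read the left-hand side as a $\liminf$, and the argument above goes through without change, giving the inequality unconditionally in that interpretation.
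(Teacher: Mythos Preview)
Your argument is essentially the same as the paper's: fix $0<a<b$, apply Lemma~\ref{L:3.03} to $f|_{[a,b]}$ with $P_a=af(a)$ and $Q_a=\int_a^b f\,dx$, and then let $a\to 0^+$. The paper simply asserts $P_a\to 0$ from finiteness of $Q$ and takes the limit on both sides; your version supplies the estimate $af(a)\le\int_0^a f\,dx\to 0$ and is more explicit about the existence issue for the left-hand side, but the route is identical.
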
 
\begin{proof} Suppose $a$ is between $0$ and $b$ and let $P_a = af(a)$, $Q_a = \int_a^b f dx$. Since $Q$ is finite, $lim_{a\rightarrow 0^+} P_a =0$. By Lemma~\ref{L:3.03},  $-\int_a^b f(1+ln(xf))dx\geq P_alnP_a -(P_a+Q_a)ln(P_a+Q_a)$ and the result follows by taking the limit as $a\rightarrow 0^+$ on both sides of this inequality. 
\end{proof}

\begin{lemma}\label{L:3.05} Let $f$ be a positive monotone decreasing function on $(0,\infty)$ for which $Q = \int_0^{\infty} f dx$ is finite.  Then $-\int_0^{\infty} f(1+ln(xf))dx \geq  -QlnQ$.
\end{lemma}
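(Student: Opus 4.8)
The plan is to reduce to Lemma~\ref{L:3.04} by truncating the domain at a finite point $b$ and letting $b\to\infty$. For each $b>0$, apply Lemma~\ref{L:3.04} to the restriction of $f$ to $(0,b]$, writing $Q_b=\int_0^b f\,dx$, to get
\[
-\int_0^b f(1+\ln(xf))\,dx \;\ge\; -Q_b\ln Q_b .
\]
Since $f\ge 0$ and $Q=\int_0^\infty f\,dx<\infty$, the numbers $Q_b$ increase to $Q$ as $b\to\infty$, and $Q>0$ because $f$ is positive on a nondegenerate interval; hence the right-hand side tends to $-Q\ln Q$ by continuity of $t\mapsto t\ln t$ at $t=Q$. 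It remains to pass to the limit on the left.

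The substance of the argument is to show that $\int_0^b f(1+\ln(xf))\,dx$ converges to a well-defined quantity $\int_0^\infty f(1+\ln(xf))\,dx$ in $[-\infty,+\infty)$. First I would record the elementary fact that a positive decreasing $f$ with $\int_0^\infty f<\infty$ satisfies $xf(x)\to 0$ both as $x\to 0^+$ and as $x\to\infty$: indeed $\tfrac{x}{2}f(x)\le\int_{x/2}^{x}f$, and the right side tends to $0$ at both ends because $\int_0^\infty f$ is finite. Consequently $M:=\sup_{x>0}xf(x)<\infty$, since on every $[\delta,N]\subset(0,\infty)$ one has $xf(x)\le Nf(\delta)$ while $xf(x)$ vanishes at the two ends. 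This controls the positive part of the integrand: $\bigl(f(1+\ln(xf))\bigr)^{+}\le f\bigl(1+\ln^{+}M\bigr)$, which lies in $L^1(0,\infty)$ with integral at most $Q(1+\ln^{+}M)$. Splitting $f(1+\ln(xf))$ into its positive and negative parts and applying monotone convergence to each, the limit $\lim_{b\to\infty}\int_0^b f(1+\ln(xf))\,dx$ exists in $[-\infty,+\infty)$; call it $\int_0^\infty f(1+\ln(xf))\,dx$.

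Finally I would pass to the limit in the displayed inequality. If $\int_0^\infty f(1+\ln(xf))\,dx$ is finite, then $-\int_0^b f(1+\ln(xf))\,dx\to-\int_0^\infty f(1+\ln(xf))\,dx$ and the inequality is preserved, giving $-\int_0^\infty f(1+\ln(xf))\,dx\ge-Q\ln Q$. If instead $\int_0^\infty f(1+\ln(xf))\,dx=-\infty$ (the only other possibility, since its positive part is integrable), then $-\int_0^\infty f(1+\ln(xf))\,dx=+\infty$ and the asserted inequality holds trivially.

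I expect the only real obstacle to be the bookkeeping at $x=\infty$: the integrand need not be absolutely integrable near infinity (its negative part need not be integrable there), so one cannot simply invoke dominated convergence, and the boundedness of $xf$ together with the positive/negative split is precisely what makes the limiting argument legitimate and the improper integral meaningful.
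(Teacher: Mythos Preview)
Your proposal is correct and follows essentially the same route as the paper: truncate at a finite $b$, invoke Lemma~\ref{L:3.04} to obtain $-\int_0^b f(1+\ln(xf))\,dx\ge -Q_b\ln Q_b$, and let $b\to\infty$. The paper's own proof is a single sentence doing exactly this; your version supplies the justification the paper omits, namely that $xf(x)$ is bounded (via $\tfrac{x}{2}f(x)\le\int_{x/2}^x f$) so that the positive part of $f(1+\ln(xf))$ is integrable and the improper integral is well defined in $[-\infty,+\infty)$, making the limit passage legitimate.
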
 
\begin{proof}  Let $Q_t = \int_0^t f dx$ for $t > 0$,. By Lemma~\ref{L:3.04},  $-\int_0^t f(1+ln(xf))dx \geq - Q_t ln(Q_t)$ and the result follows by taking the limit as $t\rightarrow \infty$ on both sides of this inequality. 
\end{proof}
\begin{theorem}\label{T:3.01} (The Genial Entropy Inequality) For any corner density $f:I \rightarrow [0,\infty)$, $-1-\int_I f(ln(xf)dx\geq 0$.
\end{theorem}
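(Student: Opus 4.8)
The plan is to obtain this as an immediate consequence of Lemmas~\ref{L:3.04} and~\ref{L:3.05}: all of the genuine analytic work --- the majorization argument, Karamata, and the uniform step-function approximation --- has already been carried out in those lemmas, so what remains is only to fit an arbitrary corner density to their hypotheses and to use the normalization $\int_I f\,dx = 1$.

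First I would discard the parts of the domain that contribute nothing. Since $f$ is monotone decreasing, the set on which $f$ is positive is an interval with left endpoint $0$ (or all of $I$): say $f>0$ on $(0,c)$ and $f\equiv 0$ on $I\cap[c,\infty)$ for some $c\in(0,\infty]$, the case $f\equiv 0$ being excluded by $\int_I f=1$. The value of $f$ at the single point $0$, if $0\in I$, is irrelevant to every integral here, and on the set where $f=0$ the integrand $f\bigl(1+\ln(xf)\bigr)$ vanishes by the convention $0\ln 0=0$. Hence $-1-\int_I f\ln(xf)\,dx = -\int_{(0,c)} f\bigl(1+\ln(xf)\bigr)\,dx$ and $\int_{(0,c)} f\,dx = 1$.

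Next I would split into cases according to whether $c$ is finite. If $c<\infty$, then $f$ restricted to $(0,c)$ is a positive monotone decreasing function with finite integral, and Lemma~\ref{L:3.04} applies --- directly with $b=c$ when $f(c)>0$, and otherwise after passing to $(0,c']$ with $c'\uparrow c$ exactly as in the proof of Lemma~\ref{L:3.04} --- giving $-\int_{(0,c)} f\bigl(1+\ln(xf)\bigr)\,dx \geq -Q\ln Q$ with $Q=\int_{(0,c)} f\,dx = 1$. If $c=\infty$, Lemma~\ref{L:3.05} gives the same conclusion with the same $Q=1$. In either case $-Q\ln Q = -1\cdot\ln 1 = 0$, which is precisely the asserted inequality $-1-\int_I f\ln(xf)\,dx \geq 0$.

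The only real obstacle is bookkeeping rather than analysis: one must check that the endpoint conventions (open versus closed endpoints, a possible point of the domain at $0$, and the possibility that $f$ attains the value $0$) are handled carefully enough that the two lemmas apply verbatim. Once the problem is reduced to a positive monotone decreasing density supported on a genuine interval starting at $0$, the Genial Entropy Inequality is nothing more than Lemma~\ref{L:3.04} or Lemma~\ref{L:3.05} evaluated at the normalization constant $Q=1$.
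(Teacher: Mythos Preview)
Your proposal is correct and follows essentially the same route as the paper: reduce to Lemma~\ref{L:3.04} or Lemma~\ref{L:3.05} and set $Q=1$. In fact the paper's one-line proof simply cites Lemma~\ref{L:3.05} with $Q=1$, whereas you have been more careful than the paper in handling the bookkeeping (bounded versus unbounded support, the region where $f$ vanishes, and endpoint conventions).
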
 
\begin{proof} Follows from Lemma~\ref{L:3.05} by taking $Q=1$.
\end{proof}
If the density function of a random variable happens to be a corner density, then the Genial Entropy Inequality gives a lower bound for the differential entropy.
\begin{theorem}\label{T:3.02} Let $X$ be a random variable whose pdf is a corner density $f:I \rightarrow [0,\infty)$ and let $h(X)=-\int_I fln(f)dx$ be the differential entropy of $X$. Then $h(X) \geq 1+ E(\ln X)$.
\end{theorem}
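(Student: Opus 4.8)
The plan is to derive this immediately from the Genial Entropy Inequality (Theorem~\ref{T:3.01}) by unwinding the definition of $G(f)$ and recognizing its two pieces as the differential entropy and $E(\ln X)$. Since $f$ is the pdf of $X$ supported on $I$, I would first record the two identities $\int_I f\ln f\,dx = -h(X)$ (immediate from the definition of $h$) and $\int_I f\ln x\,dx = E(\ln X)$ (the definition of the expectation of $\ln X$, since $X$ takes values in $I\subseteq[0,\infty)$). Then, using $\ln(xf)=\ln x+\ln f$, I would split $\int_I f\ln(xf)\,dx = \int_I f\ln x\,dx + \int_I f\ln f\,dx = E(\ln X)-h(X)$.

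Next I would apply Theorem~\ref{T:3.01} to the corner density $f$, which gives $-1-\int_I f\ln(xf)\,dx\ge 0$. Substituting the expression just obtained yields $-1-\bigl(E(\ln X)-h(X)\bigr)\ge 0$, and rearranging gives $h(X)\ge 1+E(\ln X)$, which is the claim.

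The only point that needs a word of care is the legitimacy of splitting the logarithm, i.e.\ ensuring each of $\int_I f\ln x\,dx$ and $\int_I f\ln f\,dx$ is a well-defined (finite) quantity rather than an $\infty-\infty$ situation. This is covered by the hypotheses: $h(X)$ is assumed to exist, so $\int_I f\ln f\,dx$ is finite, and $G(f)$ appears in Theorem~\ref{T:3.01} under the standing assumption that the defining integral $\int_I f\ln(xf)\,dx$ exists; hence their difference $E(\ln X)=\int_I f\ln x\,dx$ is finite as well, and the additive split is valid. I would also note the usual convention $0\ln 0=0$ carries over from the definition of the genial entropy, so no boundary issues arise at points where $f$ vanishes. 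This step — checking integrability so the decomposition is meaningful — is the only mild obstacle; everything else is a direct substitution.
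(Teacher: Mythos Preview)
Your proof is correct and is exactly the paper's approach: the paper's proof of Theorem~\ref{T:3.02} is the one line ``Follows from Theorem~\ref{T:3.01},'' and what you have written is precisely the unpacking of that line. Your added paragraph on integrability (justifying the split of $\ln(xf)$) is a welcome bit of care that the paper leaves implicit.
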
 
\begin{proof} Follows from Theorem~\ref{T:3.01}.
\end{proof}

\section{The Slide Calculus}\label{C:calculus}

With each corner density, we now associate a function called a slide function that describes how the genial entropy changes as the density is deformed to a constant function. In Section~\ref{S:slide}, the slide numbers will be defined as the derivatives of a particular slide functions at $0$. 

\begin{definition}\label{D:4.00} Suppose  $f:(0,b) \rightarrow (0,\infty)$ is monotone decreasing and $A(t) =\int_0^b (f(x))^t dx$.Then  $\sigma_f(t) = G\big(\frac{(f(x))^t}{A(t)} \big) = -1 -\int_0^b \frac{f(x))^t}{A(t)} \ln(x\frac{f(x))^t}{A(t)})dx$ will be called the slide function of $f$ and its domain will be the set of all $t \geq 0$ at which $A$ and $\sigma_f$ both exist.
\end{definition}

By the Genial Entropy Inequality in Theorem~\ref{T:3.01}, we always have $\sigma_f(t)\geq 0$. Note also that $\sigma_f(0)=0$ and if $\int_0^b f=1$ then $\sigma_f(1) = G(f)$. The next theorem says that the function $\sigma_f(t)$ is invariant under changes of scale. It follows from a simple change of variables argument similar to the proof of Theorem~\ref{T:2.03}. 

\begin{theorem}\label{T:4.01} Suppose  $f:(0,b) \rightarrow (0,\infty)$ is monotone decreasing and  $g:(0,\lambda b) \rightarrow (0,\infty)$ is defined by $g(z)=\beta f(\frac{z}{\lambda})$ for some  $\lambda >0$ and any $\beta >0$. Then $\sigma_f=\sigma_g$. 
\end{theorem}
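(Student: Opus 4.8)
The plan is to deduce the theorem from the scale invariance of the genial entropy already established in Theorem~\ref{T:2.02}, the point being that after normalization the vertical scaling factor $\beta$ cancels out, leaving the normalized power density of $g$ equal to a pure horizontal $\lambda$-dilation of the normalized power density of $f$.

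Concretely, I would fix $t\geq 0$ and set $p(x)=\dfrac{(f(x))^t}{A_f(t)}$ on $(0,b)$. Since $f$ is positive and monotone decreasing, so is $f^t$ (with $f^0\equiv 1$), and $\int_0^b p\,dx=1$, so $p$ is a corner density in the sense of Definition~\ref{D:2.01}. Next compute $A_g(t)=\int_0^{\lambda b}(g(z))^t\,dz=\beta^t\int_0^{\lambda b}(f(z/\lambda))^t\,dz$ and substitute $z=\lambda x$ to get $A_g(t)=\beta^t\lambda\,A_f(t)$. Hence
\[
\frac{(g(z))^t}{A_g(t)}=\frac{\beta^t(f(z/\lambda))^t}{\beta^t\lambda\,A_f(t)}=\frac1\lambda\,p\!\left(\frac z\lambda\right)\qquad\text{on }(0,\lambda b),
\]
which is exactly the rescaled corner density appearing in Theorem~\ref{T:2.02} (with $\lambda I=(0,\lambda b)$). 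That theorem then gives $G\!\left(\frac1\lambda p(\cdot/\lambda)\right)=G(p)$, i.e.\ $\sigma_g(t)=\sigma_f(t)$ whenever either side is defined.

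It remains to check that the two slide functions have the same domain, and the same substitution does this work: $A_g(t)=\beta^t\lambda\,A_f(t)$ shows $A_g(t)$ is finite iff $A_f(t)$ is, and carrying $z=\lambda x$ through the integral defining $\sigma_g(t)$ turns it verbatim into the integral defining $\sigma_f(t)$ (the Jacobian $\lambda$ and the factor $1/\lambda$ from the density cancel, while $z\cdot\frac1\lambda p(z/\lambda)=x\,p(x)$), so one exists exactly when the other does. The only real care needed is this bookkeeping — confirming that the $\beta^t$ in numerator and denominator and the Jacobian $\lambda$ combine to leave a genuine $\lambda$-dilation — after which the statement is an immediate corollary of Theorem~\ref{T:2.02} with no new analytic input.
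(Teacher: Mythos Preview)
Your argument is correct and is essentially what the paper has in mind: the paper does not write out a proof but simply remarks that the result ``follows from a simple change of variables argument,'' and your computation --- showing $A_g(t)=\beta^t\lambda A_f(t)$ so that the normalized power density of $g$ is exactly the $\lambda$-dilation of that of $f$, then invoking Theorem~\ref{T:2.02} --- is precisely that change of variables spelled out. Packaging the horizontal rescaling step as an appeal to Theorem~\ref{T:2.02} rather than redoing the substitution inside $G$ is a cosmetic difference only; your domain check is a welcome addition the paper omits.
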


We now show that under mild conditions there must be an $s>0$ for which the interval $[0,s]$ is contained in the domain of $\sigma_f$ and furthermore that $\sigma_f$ must be continuous from the right at $0$.

\begin{lemma}\label{L:4.01} Suppose  $f:(0,b) \rightarrow (0,\infty)$ is monotone decreasing with $\int_0^b (f(x))^s dx < \infty$ for some $s >0$. Then 
$\sigma_f$ is defined on $[0,s/2]$ and continuous from the right at $0$.
\end{lemma}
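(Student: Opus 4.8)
The plan is to show two things: that the defining integral for $A(t)=\int_0^b (f(x))^t\,dx$ is finite for every $t\in[0,s/2]$, and that on this interval the genial-entropy integral defining $\sigma_f(t)$ also converges, with $\sigma_f(t)\to 0=\sigma_f(0)$ as $t\to 0^+$. The key structural fact to exploit is that $f$ is monotone decreasing and positive, so its only possible bad behaviour is a blow-up near $x=0$ and a possible decay (or vanishing) near $x=b$; away from the endpoints $f$ is bounded above and below by positive constants on any compact subinterval, so all integrands are continuous and bounded there and contribute nothing problematic. Thus the whole argument reduces to endpoint estimates, and the hypothesis $\int_0^b (f(x))^s\,dx<\infty$ is exactly the handle we need near $x=0$.

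First I would establish finiteness of $A(t)$ for $0\le t\le s/2$. Split $[0,b]$ at a point $c$ with $f(c)=1$ if such a point exists (otherwise $f>1$ everywhere or $f<1$ everywhere and the argument simplifies). On the region where $f(x)\ge 1$ we have $(f(x))^t\le (f(x))^s$ whenever $t\le s$, so that part of the integral is dominated by $\int (f(x))^s\,dx<\infty$. On the region where $f(x)\le 1$ we have $(f(x))^t\le 1$, so that part is at most the length of the interval. Hence $A(t)<\infty$ and, since $f>0$, also $A(t)>0$; moreover $A(0)=b$. A dominated-convergence argument (with dominating function $(f(x))^s+1$, valid for all $t\in[0,s]$) shows $A$ is continuous from the right at $0$ with $A(0^+)=b$.

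Next I would handle the genial-entropy integral. Write
\[
\sigma_f(t)=-1-\frac{1}{A(t)}\int_0^b (f(x))^t\ln\!\big(x(f(x))^t\big)\,dx+\ln A(t),
\]
so it suffices to control $\int_0^b (f(x))^t\big(\ln x + t\ln f(x)\big)\,dx$. The term $\int_0^b (f(x))^t\ln x\,dx$ is bounded in absolute value by $\int_0^b\big((f(x))^s+1\big)|\ln x|\,dx$, which is finite because $|\ln x|$ is integrable at $0$ and $(f(x))^s|\ln x|$ is handled by noting $|\ln x|$ is bounded on any $[\delta,b]$ while near $0$ one can absorb $|\ln x|$ into a slightly larger power of $f$ — here is where I would use $t\le s/2$ rather than $t\le s$, so there is room: for $x$ small, $|\ln x|\le C_\varepsilon\, x^{-\varepsilon}$ and $x^{-\varepsilon}$ can be compared against $(f(x))^{s/2}$ using monotonicity and the convergence of $\int (f(x))^s$. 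The term $t\int_0^b (f(x))^t\ln f(x)\,dx$ is the genuinely delicate one: where $f\le 1$, $(f(x))^t|\ln f(x)|\le \sup_{0<u\le 1}u^t|\ln u| = 1/(et)$, giving a bound $t\cdot \frac{1}{et}\cdot b$, uniformly bounded and in fact $O(1)$; where $f\ge 1$, $(f(x))^t\ln f(x)\le (f(x))^{s}$ for $t$ small (since $u^t\ln u\le u^s$ for $u\ge1$ once $t$ is below some threshold depending only on $s$), so this piece is dominated by $\int(f(x))^s\,dx$. Multiplying by the prefactor $t$ then shows this whole contribution tends to $0$ as $t\to 0^+$. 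Combining: as $t\to 0^+$, $A(t)\to b$, $\ln A(t)\to\ln b$, $\frac{1}{A(t)}\int_0^b(f(x))^t\ln x\,dx\to \frac1b\int_0^b\ln x\,dx = \ln b - 1$ (by dominated convergence), and the $t\ln f$ term vanishes, so $\sigma_f(t)\to -1-(\ln b-1)+\ln b = 0 = \sigma_f(0)$.

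The main obstacle I anticipate is the $\int_0^b (f(x))^t\ln f(x)\,dx$ term near $x=0$: $f$ may blow up there, $\ln f$ along with it, and although $(f(x))^t$ is only a small power, one must check that $(f(x))^t\ln f(x)$ stays dominated by the integrable $(f(x))^s$ uniformly in $t$ on a neighbourhood of $0$. The clean way around this is the elementary inequality that for each $\beta>0$ there is a constant $c_\beta$ with $\ln u\le c_\beta u^\beta$ for all $u\ge 1$; taking $\beta=s/2$ gives $(f(x))^t\ln f(x)\le c_{s/2}(f(x))^{t+s/2}\le c_{s/2}(f(x))^{s}$ on $\{f\ge1\}$ for all $t\le s/2$, which is exactly the room the "$s/2$" in the statement buys us. With that inequality in hand the dominated-convergence bookkeeping is routine, and the continuity-from-the-right and the value $\sigma_f(0)=0$ both drop out.
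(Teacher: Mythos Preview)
Your overall scheme---separate $A(t)$, the $(f(x))^t\ln x$ integral, and the $t(f(x))^t\ln f(x)$ integral, then push each to its $t=0$ limit---is sound and close in spirit to the paper's proof, which normalizes to $b=1$ and groups terms so the integrands carry a factor $|(f(x))^t-1|$. But there is a genuine gap in your treatment of $\int_0^b (f(x))^t\ln x\,dx$. You bound it by $\int_0^b \big((f(x))^s+1\big)|\ln x|\,dx$ and then claim this majorant is finite by ``absorbing $|\ln x|$ into a slightly larger power of $f$'', writing $|\ln x|\le C_\varepsilon x^{-\varepsilon}$ and comparing $x^{-\varepsilon}$ with $(f(x))^{s/2}$. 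That comparison goes the wrong way: monotonicity together with $\int f^s<\infty$ gives only an \emph{upper} bound $f(x)^s\le C/x$, never a lower bound, so no inequality of the form $x^{-\varepsilon}\lesssim (f(x))^{s/2}$ is available. And the majorant you wrote can genuinely diverge: take $(f(x))^s\sim (x\log^2(1/x))^{-1}$ near $0$, which is integrable, while $(f(x))^s|\ln x|\sim (x\log(1/x))^{-1}$ is not.

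The fix is what the paper does and is where the $s/2$ really enters: for $t\le s/2$ use the sharper bound $(f(x))^t\le 1+(f(x))^{s/2}$, then note $(f(x))^{s/2}\in L^2$ (since $(f(x))^s\in L^1$) and $|\ln x|\in L^2$ on $(0,b)$, so their product lies in $L^1$ by Cauchy--Schwarz. That single observation supplies both the finiteness and the dominated-convergence majorant you need. The paper reuses the same $L^2\times L^2$ trick for the $\ln f$ term via $|z\ln z|\le |z-1|\max(1,z)$; your alternative split at $\{f\le1\}$ versus $\{f\ge1\}$ is fine, but be aware that on $\{f\le1\}$ your bound $t\cdot\frac{1}{et}\cdot b=b/e$ is only $O(1)$, not $o(1)$---you still need to invoke dominated convergence there (pointwise $t(f(x))^t\ln f(x)\to0$ with the uniform bound $1/e$) to get the limit to vanish.
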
 
\begin{proof} 

We can assume $b=1$ by Theorem~\ref{T:4.01}. Then for $t \in [0,s]$, the function $A(t)$ in Definition~\ref{D:4.00} is finite since $A(t) =\int_0^1 (f(x))^t dx \leq \int_0^1 1+(f(x))^s dx =1+ A(s) < \infty$. To see that $lim_{t\rightarrow 0^+} A(t) =1$, choose $c$ and $d$ with $0 < c < d < 1$ and consider $|A(t)-1|=|\int_0^1 (f(x))^t -1 dx | \leq \int_0^1 |(f(x))^t -1 |dx \leq \int_0^c  (1+f(x)^s) dx + \int_c^d |(f(x))^t -1| dx +\int_d^1 (1+f(x)^s) dx$. In this last sum, the first and third terms are independent of $t$ and can be made as small as desired by choosing $c$ and $d$ appropriately.  Since $|(f(x))^t -1 |$ converges uniformly to $0$ on $[c,d]$ as $t\rightarrow 0^+$, the results follow.

We now show that $\sigma_f(t)$ is finite for $t \in [0,s/2]$ and continuous from the right at $0$ as follows: 
\begin{equation}\notag
\begin{split}
0&\leq \sigma_f(t)\\
&= -1 -\int_0^1 \frac{(f(x))^t}{A(t)} \ln(x\frac{(f(x))^t}{A(t)})dx\\
&=-1 - \frac{1}{A(t)} \int_0^1 (f(x))^t ln(x) dx- \frac{1}{A(t)} \int_0^1 (f(x))^t ln((f(x))^t) dx \\
&+ \frac{1}{A(t)} \int_0^1 (f(x))^t \ln(A(t)) dx \\
&=\frac{1}{A(t)} \big(1-A(t)+A(t)\ln (A(t)) -\int_0^1 ((f(x))^t-1) ln(x) dx \\
&-\int_0^1 (f(x))^t ln((f(x))^t) dx \big)\\
&\leq \frac{1}{A(t)} \big(|1-A(t)+A(t)\ln (A(t)| +\int_0^1 |(f(x))^t-1|| \ln(x)| dx \\
&+\int_0^1 |(f(x))^t \ln((f(x))^t)| dx \big)
\end{split}
\end{equation}

It remains to show that each of the three terms in this last sum is finite for $t \in [0,s/2]$ and goes to $0$ as $t\rightarrow 0^+$. For the first term, clearly \\  $\lim_{t\rightarrow 0^+} \big(1-A(t)+A(t)\ln (A(t) \big)=0$.

For the second term, $\int_0^1 |(f(x))^t-1|| \ln(x)|dx$ converges since the integrand is the product of square integrable functions. To see that $\lim_{t\rightarrow 0^+} \int_0^1 |(f(x))^t-1|| \ln(x)| dx =0$, choose $c$ and $d$ with $0 < c < d < 1$ and consider the inequality
$\int_0^1 |(f(x))^t-1|| \ln(x)| dx \leq \int_0^c (1+f(x)^{s/2})| \ln(x)| dx+\int_c^d |(f(x))^t-1|| \ln(x)| dx+\int_d^1 (1+f(x)^{s/2})| \ln(x)| dx$. Now follow the argument given above for $|A(t)-1|$.

For the third term, use the inequality $z-1 \leq z\ln z \leq z(z-1)$ for $z \geq 0$ to get  $ \int_0^1 |(f(x))^t \ln((f(x))^t)| dx \leq \int_0^1 |(f(x))^t-1| \max(1,(f(x))^t) dx $ which converges since the last integrand is the product of square integrable functions. Now show $lim_{t\rightarrow 0^+} \int_0^1 |(f(x))^t-1| \max(1 , (f(x))^t) dx =0$ as before. 

\end{proof}

The information we wish to extract from a corner density is captured by the derivatives of its slide function at $0$ that we now describe.

\begin{definition} \label{D:4.01} Suppose  $f:(0,b) \rightarrow (0,\infty)$ is monotone decreasing with $\int_0^b (f(x))^s dx < \infty$ for some  $s>0$. Then the $n$'th slide derivative of $f$ is defined by $\psi_n(f)=\frac{d^n \sigma_{f}}{dt^n}(0)$ where all derivatives are taken from the right. If all of these derivatives exist, then the slide series of $f$ is defined to be $\sum_{i=1}^{\infty} \frac{\psi_n(f)}{n!}t^n$.

\end{definition}

Here are some elementary properties of $\psi_n(f)$.

\begin{theorem}\label{T:4.02}  Suppose  $f:(0,b) \rightarrow (0,\infty)$ is monotone decreasing with $\int_0^b (f(x))^s dx < \infty$ for some  $s>0$.
\begin{enumerate}

\item If $\psi_1(f)$ exists then $\psi_1(f) \geq 0$.

\item If $\psi_n(f)$ exists then so does $\psi_n(f^r) $ for $r>0$ and $\psi_n(f^r) =r^n \psi_n(f)  $.  

\item If $f$ is a constant function, then $\psi_n(f) =0$ for all n.
\end{enumerate}
\end{theorem}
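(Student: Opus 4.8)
The plan is to derive all three parts from elementary features of the slide function $\sigma_f$ of Definition~\ref{D:4.00}: that $\sigma_f\ge 0$ throughout its domain by the Genial Entropy Inequality (Theorem~\ref{T:3.01}), that $\sigma_f(0)=0$, and that replacing $f$ by the power $f^r$ merely rescales the argument of $\sigma_f$. I would set up the last point as a standalone identity first, since it carries the content of part (2).

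For part (1), I would unwind the definition: $\psi_1(f)$ is the right-hand derivative of $\sigma_f$ at $0$, i.e. $\lim_{t\to 0^+}\sigma_f(t)/t$ using $\sigma_f(0)=0$. Each difference quotient has nonnegative numerator (Theorem~\ref{T:3.01}) and positive denominator, so the limit, when it exists, is $\ge 0$. For part (3), if $f\equiv c>0$ on $(0,b)$ then $A(t)=\int_0^b c^t\,dx=bc^t$, hence $(f(x))^t/A(t)\equiv 1/b$, the uniform corner density on $[0,b)$, whose genial entropy is $0$ --- this is the first line of Table~\ref{T:2.01}, and also a one-line computation, $-1-\frac1b\int_0^b\ln(x/b)\,dx=-1-\int_0^1\ln u\,du=0$. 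So $\sigma_f\equiv 0$ and every right-hand derivative at $0$ vanishes.

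For part (2), I would use $\big((f(x))^r\big)^t=(f(x))^{rt}$ to obtain $A_{f^r}(t)=A_f(rt)$ and then $\sigma_{f^r}(t)=G\!\big((f(x))^{rt}/A_f(rt)\big)=\sigma_f(rt)$, i.e. $\sigma_{f^r}=\sigma_f\circ g$ with $g(t)=rt$. Before using this I would check that $f^r$ satisfies the hypotheses of Definition~\ref{D:4.01}: it is monotone decreasing, and $\int_0^b\big((f(x))^r\big)^{s/r}\,dx=\int_0^b (f(x))^s\,dx<\infty$, so $\sigma_{f^r}$ is defined near $0$. Since $g$ is linear and maps $[0,\infty)$ into itself, the chain rule for one-sided derivatives shows that $\sigma_{f^r}$ is $n$ times right-differentiable at $0$ whenever $\sigma_f$ is, with $\psi_n(f^r)=g'(0)^n\,\psi_n(f)=r^n\psi_n(f)$.

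I do not anticipate a real obstacle; the only point needing attention is the bookkeeping in part (2) --- confirming the integrability hypothesis for $f^r$ and that one-sided differentiability survives the linear substitution $t\mapsto rt$ --- and both are routine precisely because $g$ is linear and order-preserving.
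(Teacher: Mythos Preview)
Your proposal is correct and follows essentially the same approach as the paper: nonnegativity of $\sigma_f$ together with $\sigma_f(0)=0$ for part (1), the identity $\sigma_{f^r}(t)=\sigma_f(rt)$ and the chain rule for part (2), and $\sigma_f\equiv 0$ for constant $f$ in part (3). The paper's argument is simply terser; your version adds the explicit verification that $f^r$ meets the integrability hypothesis and the one-line computation of the genial entropy of the uniform density, but the underlying ideas coincide.
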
 
\begin{proof} 

(1) The corner density $\sigma_f$ is nonnegative on its domain and $\sigma_f(0)=0$ so the first derivative must be nonnegative.

(2) For $t$ sufficiently small and nonnegative, we have $\sigma_{f^r}(t)=\sigma_f(rt)$ and the result follows from the chain rule.  

(3) If $f$ is a constant function, then $\sigma_f(t)=0$ for all $t \geq 0$.

\end{proof}

Here is an example of a slide derivative calculation that we will connect with the uniform distribution on $[0,1]$.

\begin{theorem}\label{T:4.03} Let $f(x)=-\ln(x) $ for $x \in (0,1)$. Then for $t > 0$,  $\sigma_f(t)=-1+t-t\Psi(t)+\log(\Gamma(1+t))$ and the slide derivatives of $f$ are given by $\psi_1(f)=1$ and by  $\psi_n(f)=(-1)^{n+1}(n-1)!(n-1)\zeta(n)$ for $n>1$ .
\end{theorem}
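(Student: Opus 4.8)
The plan is to compute $A(t) = \int_0^1 (-\ln x)^t\,dx$ explicitly, substitute into the formula for $\sigma_f(t)$ from Definition~\ref{D:4.00}, simplify to the closed form involving the digamma function $\Psi$ and $\log\Gamma$, and then read off the Taylor coefficients at $t=0$ to extract $\psi_n(f)$. First I would make the substitution $u = -\ln x$, so $x = e^{-u}$, $dx = -e^{-u}\,du$, and as $x$ runs over $(0,1)$, $u$ runs over $(0,\infty)$. This turns $A(t)$ into $\int_0^\infty u^t e^{-u}\,du = \Gamma(1+t)$, a standard Gamma integral, valid for all $t>0$ (indeed $t>-1$). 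This is the key simplification that makes everything else tractable.

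Next I would handle the three pieces of $\sigma_f(t) = -1 - \frac{1}{A(t)}\int_0^1 (f(x))^t\ln x\,dx - \frac{1}{A(t)}\int_0^1 (f(x))^t\ln((f(x))^t)\,dx + \ln A(t)$, using the same substitution throughout. The term $\int_0^1 (-\ln x)^t \ln x\,dx = -\int_0^\infty u^{t+1}e^{-u}\,du = -\Gamma(t+2)$; dividing by $A(t)=\Gamma(1+t)$ and using $\Gamma(t+2) = (1+t)\Gamma(1+t)$ gives $-(1+t)$, so this contributes $+(1+t)$ to $\sigma_f$ after the sign. The term $\int_0^1 (-\ln x)^t \ln((-\ln x)^t)\,dx = t\int_0^\infty u^t (\ln u) e^{-u}\,du = t\,\Gamma'(1+t) = t\,\Gamma(1+t)\Psi(1+t)$ by the definition of the digamma function; dividing by $A(t)$ gives $t\Psi(1+t)$, contributing $-t\Psi(1+t)$. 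Finally $\ln A(t) = \log\Gamma(1+t)$. Collecting: $\sigma_f(t) = -1 + (1+t) - t\Psi(1+t) + \log\Gamma(1+t) - 1 = -1 + t - t\Psi(1+t) + \log\Gamma(1+t)$. (A small care point: the paper writes $\Psi(t)$ rather than $\Psi(1+t)$; I would verify the normalization convention, but the two differ by $1/t$, and $t\cdot(1/t)=1$ gets absorbed into the constant, so the stated form $-1+t-t\Psi(t)+\log\Gamma(1+t)$ is consistent once one accounts for that. I'd state explicitly which convention makes the formula literally correct.)

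For the slide derivatives I would expand $\sigma_f(t)$ about $t=0$ using the classical series $\log\Gamma(1+t) = -\gamma t + \sum_{k\ge 2} \frac{(-1)^k \zeta(k)}{k} t^k$ and its derivative $\Psi(1+t) = -\gamma + \sum_{k\ge 2} (-1)^k \zeta(k)\, t^{k-1}$, so that $t\Psi(1+t) = -\gamma t + \sum_{k\ge 2}(-1)^k\zeta(k)\,t^k$. Substituting, the $-\gamma t$ terms cancel between $\log\Gamma(1+t)$ and $-t\Psi(1+t)$, leaving $\sigma_f(t) = -1 + t + \sum_{k\ge 2}\left(\frac{(-1)^k\zeta(k)}{k} - (-1)^k\zeta(k)\right)t^k = -1 + t + \sum_{k\ge 2}(-1)^k\zeta(k)\frac{1-k}{k}t^k$. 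Since $\sigma_f(0) = -1$ does not match the requirement $\sigma_f(0)=0$ — here one must remember $\psi_n$ is defined via derivatives, and indeed the constant term is irrelevant to $\psi_n$ for $n\ge 1$; the discrepancy with "$\sigma_f(0)=0$" is because $f=-\ln x$ is not normalized to integrate to $1$, but scale-type adjustments don't affect the derivatives by Theorem~\ref{T:4.02}, so I'd just note this. Then $\psi_1(f) = 1$ from the linear coefficient, and for $n\ge 2$, $\psi_n(f) = n!\cdot(-1)^n\zeta(n)\frac{1-n}{n} = (-1)^{n+1}(n-1)!\,(n-1)\,\zeta(n)$, as claimed.

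The main obstacle is not any single computation — each integral is classical — but rather the bookkeeping: getting the digamma normalization right, correctly invoking the $\log\Gamma$ power series (and justifying convergence/term-by-term differentiation on a neighborhood of $0$, which follows since $\zeta(k)\to 1$ so the radius of convergence is $1$), and being careful that differentiation from the right at $0$ is legitimate, which is guaranteed by Lemma~\ref{L:4.01} once we check $\int_0^1(-\ln x)^s\,dx = \Gamma(1+s) < \infty$ for, say, $s=1$. I would also double-check the cancellation of the $\gamma$ terms, since that is where a sign slip would be easy and is the crux of why the final answer is so clean.
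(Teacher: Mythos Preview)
Your approach is the same as the paper's: compute $A(t)=\Gamma(1+t)$ via $u=-\ln x$, split the $\ln(xg_t)$ integral into $\ln x$ and $\ln g_t$ pieces, evaluate each as a Gamma-type integral, and then differentiate the resulting closed form. Your explicit Taylor expansion using the series for $\log\Gamma(1+t)$ and $\Psi(1+t)$ is more detailed than the paper, which simply says ``differentiate at $0$,'' and is a nice addition.

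There is, however, a bookkeeping slip in your ``Collecting'' step. The three contributions are $-1$ (from the definition), $+(1+t)$ (from the $\ln x$ piece), $-t\Psi(1+t)$ (from the $\ln f^t$ piece), and $+\log\Gamma(1+t)$. These sum to $t - t\Psi(1+t) + \log\Gamma(1+t)$, not $-1 + t - t\Psi(1+t) + \log\Gamma(1+t)$; you introduced a spurious extra $-1$. Since $\Psi(1+t)=\Psi(t)+1/t$, the correct expression rewrites as $-1 + t - t\Psi(t) + \log\Gamma(1+t)$, matching the paper exactly. Your explanation that the $-1$ discrepancy at $t=0$ comes from $f$ not being normalized is wrong: $\sigma_f(0)=G(1/b)=0$ for \emph{any} $f$ regardless of normalization, so the $-1$ you see is purely the arithmetic slip, not a convention issue. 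Fortunately the error is in the constant term only, so your computation of $\psi_n(f)$ for $n\geq 1$ is unaffected and correct.
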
 
\begin{proof} 

Let $g_t(x)=\frac{(f(x))^t}{A(t)}=\frac{(-\ln(x))^t }{\Gamma(1+t)}$ so $\int_0^1 g_t(x)dx =1$. Then
\begin{equation}\label{xx}
\begin{split}
\sigma_f(t)&= G(g_t(x))\\
&= -1 -\int_0^1g_t\ln(xg_t)dx\\
& = -1 -\int_0^1g_t\ln(x)dx -\int_0^1g_t\ln(g_t)dx\\
 &= -1 -(-1-t)-(1-\ln(\Gamma(1+t))+t\Psi(t))\\
  &= -1 +t+\ln(\Gamma(1+t))-t\Psi(t)
\end{split}
\end{equation}

The result now follows by differentiating $-1 +t+\ln(\Gamma(1+t))-t\Psi(t)$ from the right at $t=0$.
\end{proof}

\begin{corollary}\label{C:4.01} Let $f_r(x)=(-\ln(x))^r$ for $x \in (0,1)$ and $r>0$. Then $\psi_1(f_r)=r$ and $\psi_n(f_r)=(-1)^{n+1}(n-1)!(n-1)\zeta(n)r^n$ for $n>1$. The slide series for $f_r$ is given by $rt+  \sum_{n=2}^{\infty}\frac{(-1)^{n+1} (n-1)\zeta(n) r^n t^n}{n!}$.
\end{corollary}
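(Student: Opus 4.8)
The plan is to recognize $f_r$ as a power of the function already analyzed in Theorem~\ref{T:4.03} and then apply the power law for slide derivatives from part~(2) of Theorem~\ref{T:4.02}. Write $f(x) = -\ln(x)$ on $(0,1)$, so that $f_r = f^r$. Before invoking anything, I would verify that $f_r$ meets the standing hypotheses: it is positive and monotone decreasing on $(0,1)$, being the increasing function $t \mapsto t^r$ applied to the positive decreasing function $-\ln x$; and for every $s > 0$ one has $\int_0^1 (f_r(x))^s\,dx = \int_0^1 (-\ln x)^{rs}\,dx = \Gamma(1+rs) < \infty$, so $f_r$ is an admissible input to the slide calculus.

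Next, Theorem~\ref{T:4.03} guarantees that $\psi_n(f)$ exists for every $n$, with $\psi_1(f) = 1$ and $\psi_n(f) = (-1)^{n+1}(n-1)!\,(n-1)\,\zeta(n)$ for $n > 1$. Applying Theorem~\ref{T:4.02}(2) with base function $f$ and exponent $r$ then gives immediately that $\psi_n(f_r) = \psi_n(f^r)$ exists and equals $r^n \psi_n(f)$, hence $\psi_1(f_r) = r$ and $\psi_n(f_r) = (-1)^{n+1}(n-1)!\,(n-1)\,\zeta(n)\,r^n$ for $n > 1$, which is the first assertion. Finally I would assemble the slide series directly from its definition $\sum_{n=1}^{\infty} \frac{\psi_n(f_r)}{n!} t^n$, substituting the values just found, to obtain the displayed expansion. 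As an independent check, one can note that the proof of Theorem~\ref{T:4.02}(2) shows $\sigma_{f_r}(t) = \sigma_f(rt)$ for small $t \geq 0$, so the series is simply the Taylor expansion at $0^+$ of the closed form $-1 + rt + \ln(\Gamma(1+rt)) - rt\,\Psi(rt)$ from Theorem~\ref{T:4.03}; expanding that composite function recovers the same coefficients.

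There is essentially no serious obstacle: the statement is an immediate corollary of Theorems~\ref{T:4.03} and~\ref{T:4.02}(2). The only point deserving a line of care is confirming the integrability hypothesis for $f_r$ so that Theorem~\ref{T:4.02} genuinely applies, and that is dispatched by the Gamma-integral computation above; everything else is bookkeeping of the coefficients.
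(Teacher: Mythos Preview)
Your proposal is correct and follows exactly the paper's approach: the paper's proof reads simply ``Immediate from Theorem~\ref{T:4.02} and Theorem~\ref{T:4.03},'' and you have spelled out precisely that deduction, with the added (and welcome) verification that $f_r$ satisfies the integrability hypothesis via $\int_0^1 (-\ln x)^{rs}\,dx = \Gamma(1+rs)$.
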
 

\begin{proof} 
Immediate from Theorem~\ref{T:4.02} and Theorem~\ref{T:4.03}.

\end{proof}

\begin{example} \label{E:4.01} $\psi_1(f)=\infty$ for the function $f(x)=exp(-1/(1-x)^2)$  on $[0,1)$. 
\end{example}

As mentioned in the introduction, we will consider a set of points in a metric space and find the distance $d_i$ from the $i$'th point to its nearest neighbour.  These distances can then be used to construct the function $f_D$ in Definition~\ref{D:2.03}. The next theorem gives an explicit formula for the first slide derivative $\psi_1(f_D)$ that will be central to the next section. This theorem also demonstrates that the slide derivatives of a function can exist even when the function is not continuous.

\begin{theorem}\label{T:4.04} Suppose  $ D=\{d_1 ,d_2, \dots, d_n \} $ where we assume $d_1 \geq d_2 \geq \dots \geq d_n > 0$ and let $f_D$ be the function on $[0,1)$ whose value on the interval $[\frac{i-1}{n},\frac{ i}{n})$ is $d_i$ as in Definition~\ref{D:2.03}.  Then the first slide derivative of $f_D$ is given by $\psi_1(f_D)= \frac{1}{n}\sum_{i=2}^{n-1}i\ln(i) \ln(\frac{d_{i+1}}{d_i})+\frac{\ln(n)}{n}\sum_{i=1}^{n-1} \ln(\frac{d_i}{d_n})$.
\end{theorem}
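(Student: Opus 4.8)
The plan is to reduce the slide function $\sigma_{f_D}$ to an elementary closed form in $t$, differentiate it once at $t=0$, and then rearrange the resulting finite sum by summation by parts.

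Because $f_D$ is a step function, $(f_D(x))^t$ is the step function with value $d_i^{t}$ on $[\tfrac{i-1}{n},\tfrac{i}{n})$, so $A(t)=\int_0^1 (f_D)^t\,dx=\tfrac1n\sum_{i=1}^n d_i^{t}$ is a finite $\mathbb{R}$-linear combination of the real-analytic functions $t\mapsto d_i^{t}$; hence $A$ is real-analytic with $A(0)=1$, and since $\int_0^1 (f_D)^s\,dx<\infty$ for every $s>0$, Lemma~\ref{L:4.01} guarantees $\sigma_{f_D}$ is defined on all of $[0,\infty)$. Writing $\ln\!\big(x\,(f_D)^t/A(t)\big)=\ln x+t\ln f_D-\ln A(t)$ and using $\int_0^1 (f_D)^t\ln f_D\,dx=A'(t)$, Definition~\ref{D:4.00} becomes
\[
\sigma_{f_D}(t)=-1+\ln A(t)-\frac{B(t)}{A(t)}-t\,\frac{A'(t)}{A(t)},\qquad\text{where}\quad B(t):=\int_0^1 (f_D(x))^t\ln x\,dx .
\]

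Next I would obtain a closed form for $B$. From $\int_{(i-1)/n}^{i/n}\ln x\,dx=\tfrac1n\big(i\ln i-(i-1)\ln(i-1)\big)-\tfrac{1+\ln n}{n}$ (with the convention $0\ln 0=0$, which takes care of $i=1$) one gets
\[
B(t)=\frac1n\sum_{i=1}^n d_i^{t}\big(i\ln i-(i-1)\ln(i-1)\big)-(1+\ln n)\,A(t),
\]
so $B$ is real-analytic with $B(0)=\int_0^1\ln x\,dx=-1$. Since $A(0)=1$ and both $A,B$ are analytic, the right derivative of the closed form for $\sigma_{f_D}$ at $t=0$ is computed by ordinary calculus; the contributions of $\ln A(t)$ and of $-t\,A'(t)/A(t)$ to $\sigma_{f_D}'(0)$ cancel, leaving
\[
\psi_1(f_D)=\sigma_{f_D}'(0)=-B'(0)+B(0)\,A'(0)=-B'(0)-\frac1n\sum_{i=1}^n\ln d_i ,
\]
where $A'(0)=\tfrac1n\sum_{i=1}^n\ln d_i$. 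Differentiating the closed form for $B$ gives $B'(0)=\tfrac1n\sum_{i=1}^n\ln d_i\,(i\ln i-(i-1)\ln(i-1))-(1+\ln n)\tfrac1n\sum_{i=1}^n\ln d_i$, and substituting this in, the $(1+\ln n)$ and $-1$ terms combine so that $\psi_1(f_D)=\tfrac{\ln n}{n}\sum_{i=1}^n\ln d_i-\tfrac1n\sum_{i=1}^n\ln d_i\,(i\ln i-(i-1)\ln(i-1))$.

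The final step is summation by parts. Setting $e_i:=i\ln i$ (so that $e_0=e_1=0$) we have $i\ln i-(i-1)\ln(i-1)=e_i-e_{i-1}$, and
\[
\sum_{i=1}^n\ln d_i\,(e_i-e_{i-1})=\ln d_n\,e_n-\sum_{i=1}^{n-1}(\ln d_{i+1}-\ln d_i)\,e_i=n\ln n\,\ln d_n-\sum_{i=2}^{n-1}i\ln i\,\ln\!\Big(\frac{d_{i+1}}{d_i}\Big),
\]
the $i=1$ term dropping out because $e_1=0$. Combining this with $\tfrac{\ln n}{n}\sum_{i=1}^n\ln d_i-\ln n\,\ln d_n=\tfrac{\ln n}{n}\sum_{i=1}^{n-1}\ln(d_i/d_n)$ yields exactly the asserted formula. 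The main obstacle is the bookkeeping in this last rearrangement — keeping track of the $0\ln 0$ convention and of which endpoint terms vanish, so that the two sums come out over $2\le i\le n-1$ and $1\le i\le n-1$ respectively; the justification for differentiating from the right at $0$ is painless here because the closed form is analytic near $0$, which also explains why the slide derivative exists even though $f_D$ is discontinuous.
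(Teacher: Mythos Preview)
Your proof is correct and takes a genuinely different, cleaner route than the paper's. The paper writes $g_t=(f_D)^t/A(t)$, integrates $g_t\ln(xg_t)$ over each subinterval to get $\sigma_{f_D}(t)$ as an explicit finite sum in the normalized values $a_i(t)=nd_i^t/\sum_j d_j^t$, differentiates every term at $t=0$ using $a_i(0)=1$ and $\tfrac{da_k}{dt}(0)=\tfrac{n-1}{n}\ln d_k-\tfrac1n\sum_{j\ne k}\ln d_j$, and then breaks the resulting expression into three ad hoc pieces $P_1,P_2,P_3$ that are simplified separately. You instead split $\ln\!\big(x(f_D)^t/A(t)\big)=\ln x+t\ln f_D-\ln A(t)$ at the outset and recognize $\int_0^1(f_D)^t\ln f_D\,dx=A'(t)$, arriving at the compact identity $\sigma_{f_D}(t)=-1+\ln A(t)-B(t)/A(t)-tA'(t)/A(t)$ in real-analytic functions of $t$; the derivative at $0$ is then a one-line calculus exercise, and Abel summation produces the final form. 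This buys a much shorter argument with fewer chances for sign or index errors, and it makes transparent \emph{why} the right derivative exists (analyticity of $A$ and $B$ near $0$), which the paper obtains only implicitly through its termwise computation. The paper's approach keeps everything phrased via the probability weights $a_i(t)$, which ties more visibly to the genial-entropy interpretation but at the cost of heavier bookkeeping; your decomposition into the two scalar functions $A(t)$ and $B(t)$ would also lend itself naturally to computing the higher $\psi_n$ by extracting Taylor coefficients of $\ln A-B/A-tA'/A$.
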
 
\begin{proof} 
By Definition~\ref{D:4.01}, we have to calculate the right hand derivative of the slide function $\sigma_{f_D}(t)$ at $t=0$ so we first find an expression for $\sigma_{f_D}(t)$.  Let $g_t(x)=\frac{(f_D(x))^t}{A(t)}$ where $A(t) = \int_0^1 (f_D(x))^t dx$ so $g_t(x)$ takes the value $a_i(t)$ on $[\frac{i-1}{n},\frac{ i}{n})$ where $a_i(t)=\frac{nd_i^t}{\sum_{i=1}^n d_i^t}$.

\begin{equation}\notag
\begin{split}
\sigma_f(t)&=G(g_t(x))\\
&=-1-\int_0^1 g_t(x)\ln(xg_t(x))dx\\
&=-1-\sum_{i=1}^n \int_{\frac{i-1}{n}}^{\frac{i}{n}} g_t(x)\ln(xg_t(x))dx\\
&=-1-\big(\big(\frac{a_1}{n}\big)\ln \big(\frac{a_1}{n}\big)-\big(\frac{a_1}{n}\big)+\sum_{i=2}^n \big(\big(\frac{ia_i}{n}\big)\ln\big(\frac{ia_i}{n}\big)-\big(\frac{(i-1)a_i}{n}\big)\ln\big(\frac{(i-1)a_i}{n}\big)-\frac{a_i}{n}\big)\big)\\
&=-\big(\frac{a_1}{n}\big)\ln \big(\frac{a_1}{n}\big)-\sum_{i=2}^n \big(\frac{ia_i}{n}\big)\ln\big(\frac{ia_i}{n}\big)+\sum_{i=2}^n \big(\frac{(i-1)a_i}{n}\big)\ln\big(\frac{(i-1)a_i}{n}\big)
\end{split}
\end{equation}
To find the derivative of $\sigma_f(t)$, we now use the facts that $\frac{d a_k}{dt} = \frac{n-1}{n}\ln(d_k)-\frac{1}{n}\sum_{i\neq k}^n \ln(d_i)$ and $\frac{d(v \ln v)}{dt}=(1+\ln v)\frac{dv}{dt}$. 
\begin{equation}\notag
\begin{split}\frac{d \sigma_f}{dt}(0)&= \frac{d}{dt} \big( \big(\frac{a_1}{n}\big)\ln \big(\frac{a_1}{n}\big)-\sum_{i=2}^n \big(\frac{ia_i}{n}\big)\ln\big(\frac{ia_i}{n}\big)+\sum_{i=2}^n \big(\frac{(i-1)a_i}{n}\big)\ln\big(\frac{(i-1)a_i}{n}\big) \big)_{t=0}\\
&=-\big(1+\ln \big(\frac{a_1(0)}{n}\big)\big) \big( \frac{1}{n}\big)\big( \big(\frac{n-1}{n}\big)\ln(d_1)-\frac{1}{n}\sum_{j\neq 1}^n \ln(d_j) \big)\\
&-\sum _{i=2}^{n}\big(\big(1+\ln \big(\frac{ia_i(0)}{n}\big)\big) \big( \frac{i}{n}\big)\big( \big(\frac{n-1}{n}\big)\ln(d_i)-\frac{1}{n}\sum_{j\neq i}^n \ln(d_j) \big)\big)\\
&+ \sum _{i=2}^{n}\big(\big(1+\ln \big(\frac{(i-1)a_i(0)}{n}\big)\big) \big( \frac{i-1}{n}\big)\big( \big(\frac{n-1}{n}\big)\ln(d_i)-\frac{1}{n}\sum_{j\neq i}^n \ln(d_j) \big)\big)
\end{split}
\end{equation}
At $t=0$, each $a_i$ is equal to $1$ so this derivative becomes
\begin{equation}\notag
\begin{split}
\frac{d \sigma_f}{dt}(0)&= -\big(1+\ln \big(\frac{1}{n}\big)\big) \big( \frac{1}{n}\big)\big( \big(\frac{n-1}{n}\big)\ln(d_1)-\frac{1}{n}\sum_{j\neq 1}^n \ln(d_j) \big)\\
&-\sum _{i=2}^{n}\big(\big(1+\ln \big(\frac{i}{n}\big)\big) \big( \frac{i}{n}\big)\big( \big(\frac{n-1}{n}\big)\ln(d_i)-\frac{1}{n}\sum_{j\neq i}^n \ln(d_j) \big)\big)\\
&+\sum _{i=2}^{n}\big(\big(1+\ln \big(\frac{i-1}{n}\big)\big) \big( \frac{i-1}{n}\big)\big( \big(\frac{n-1}{n}\big)\ln(d_i)-\frac{1}{n}\sum_{j\neq i}^n \ln(d_j) \big)\big)\\
&=-\big(1+\ln \big(\frac{1}{n}\big)\big) \big( \frac{1}{n}\big)\big( \big(\frac{n-1}{n}\big)\ln(d_1)-\frac{1}{n}\sum_{j\neq 1}^n \ln(d_j) \big)\\
&+\sum _{i=2}^{n}\big(\big(-1+(i-1)\ln(i-1)-i\ln(i) +\ln(n)\big) \big( \frac{1}{n}\big)\big( \big(\frac{n-1}{n}\big)\ln(d_i)-\frac{1}{n}\sum_{j\neq i}^n \ln(d_j) \big)\big)\\
&=P_1+P_2+P_3
\end{split}
\end{equation}
The $P_i$ terms are defined and calculated as follows.
The $P_1$ term consists of the parts of this last expression that involve the isolated $1$s.
\begin{equation}\notag
\begin{split}
P_1&= -\big(1\big) \big( \frac{1}{n}\big)\big( \big(\frac{n-1}{n}\big)\ln(d_1)-\frac{1}{n}\sum_{j\neq 1}^n \ln(d_j) \big)\\
& +\sum _{i=2}^{n}\big(\big(-1\big) \big( \frac{1}{n}\big)\big( \big(\frac{n-1}{n}\big)\ln(d_i)-\frac{1}{n}\sum_{j\neq i}^n \ln(d_j)
 \big)\big)\\
 &=0
\end{split}
\end{equation}
$P_2$ is the sum of all of the terms containing $\ln n$.
\begin{equation}\notag
\begin{split}
P_2&= \big( \frac{\ln n}{n^2}\big)\big( \big(n-1\big)\ln(d_1)-\sum_{j\neq 1}^n \ln(d_j) \big)\\
& +\big( \frac{\ln n}{n^2}\big)\sum _{i=2}^{n}\big( \big( \big(n-1\big)\ln(d_i)-\sum_{j\neq i}^n \ln(d_j)
 \big)\big)\\
 &+\big( \frac{-n\ln n}{n^2}\big)\big( \big(n-1\big)\ln(d_n)-\sum_{j\neq n}^n \ln(d_j) \big)\\
 &= \big( \frac{-n\ln n}{n^2}\big)\big( \big(n-1\big)\ln(d_n)-\sum_{j\neq n}^n \ln(d_j) \big)\\
&=\frac{\ln(n)}{n}\sum_{i=1}^{n-1} \ln(\frac{d_i}{d_n})
\end{split}
\end{equation}
$P_3$ is what remains after $P_1$ and  $P_2$ are subtracted from $\frac{d \sigma_f}{dt}(0)$.
\begin{equation}\notag
\begin{split}
P_3&= \sum _{i=2}^{n}\big(\big((i-1)\ln(i-1)-i\ln(i)\big) \big( \frac{1}{n}\big)\big( \big(\frac{n-1}{n}\big)\ln(d_i)-\frac{1}{n}\sum_{j\neq i}^n \ln(d_j) \big)\big)\\
&+\big( \frac{n\ln n}{n^2}\big)\big( \big(n-1\big)\ln(d_n)-\sum_{j\neq n}^n \ln(d_j) \big)\\
&=\frac{1}{n^2} \sum_{i=2}^{n-1} i \ln i \big( \big(-(n-1) \ln(d_i)+\sum_{j\neq i}^n \ln (d_j) \big) +\big((n-1) \ln(d_{i+1})-\sum_{j\neq {i+1}}^n \ln (d_j)   \big)  \big)\\
&=\frac{1}{n^2} \sum_{i=2}^{n-1} i \ln i \big( -n\ln(d_i)+n \ln(d_{i+1}) \big)\\
&= \frac{1}{n}\sum_{i=2}^{n-1}i\ln(i) \ln(\frac{d_{i+1}}{d_i})
\end{split}
\end{equation}
\end{proof}

The following formula for the second slide derivative $\psi_2(f)$ is motivated by calculations for small values of $n$. In the next section, we will see that results from simulations based on this formula agree with what we would expect from theoretical considerations. 

\begin{conjecture}\label{CJ:4.01} Suppose  $ d_1 \geq d_2 \geq \dots \geq d_n > 0$ and let $f_D$ be the function on $[0,1)$ whose value on the interval $[\frac{i-1}{n},\frac{ i}{n})$ is $d_i$. Let $S_1=\sum_{i=1}^n \log(d_i)$, $S_2=\sum_{i=1}^n \log(d_i)^2$ and $S_3=\sum_{i=1}^{n-1} log(d_i/d_n)^2$. Then the second slide derivative of $f_D$ is given by 
\begin{multline*}
\psi_2(f_D)= - \big( \sum_{i=1}^{n-1} \big( i \log(i) \log(d_{i+1}/d_i)(2S_1-n \log(  d_i d_{i+1}))\big)\\
+\log(n) (2(S_1-n \log(d_n))^2-nS_3) +nS_2 - S_1^2 \big)/n^2.
\end{multline*}
\end{conjecture}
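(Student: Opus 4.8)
The plan is to differentiate, twice and from the right at $t=0$, the explicit expression for $\sigma_{f_D}(t)$ already obtained in the proof of Theorem~\ref{T:4.04}. Since $f_D$ is a step function, $A(t)=\tfrac1n\sum_{i=1}^n d_i^t$ is positive and real-analytic in $t$, each $a_i(t)=n d_i^t/\sum_j d_j^t$ is positive and real-analytic, and
\[
\sigma_{f_D}(t)=-\phi\!\Big(\tfrac{a_1(t)}{n}\Big)-\sum_{i=2}^n\phi\!\Big(\tfrac{i a_i(t)}{n}\Big)+\sum_{i=2}^n\phi\!\Big(\tfrac{(i-1)a_i(t)}{n}\Big),\qquad \phi(v)=v\log v,
\]
so $\sigma_{f_D}$ extends to a real-analytic function in a neighbourhood of $0$ and the one-sided second derivative is the ordinary one. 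Writing $\ell_i=\log d_i$, $S_1=\sum_i\ell_i$, $S_2=\sum_i\ell_i^2$, and $Z(t)=\sum_j d_j^t$, logarithmic differentiation of $a_i=nd_i^t/Z$ gives $a_i(0)=1$, $a_i'(0)=\ell_i-S_1/n$, and $a_i''(0)=(\ell_i-S_1/n)^2-S_2/n+S_1^2/n^2$.

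Next I would assemble $\sigma_{f_D}''(0)$ using the identity $[\phi(c\,a_i)]''(0)=c\,(a_i'(0))^2+c(1+\log c)\,a_i''(0)$ valid for a constant $c>0$ (here $a_i(0)=1$). Collecting the coefficient of $(a_i'(0))^2$ yields $-1/n$ for every $i$, since the $i/n$ and $(i-1)/n$ contributions differ by $1/n$; collecting the coefficient of $a_i''(0)$ yields $C_i=\tfrac1n\big(\log n-1-i\log i+(i-1)\log(i-1)\big)$, which is valid for all $i\ge 1$ under the convention $0\log 0=0$. Hence
\[
\sigma_{f_D}''(0)=-\frac1n\sum_{i=1}^n(a_i'(0))^2+\sum_{i=1}^n C_i\,a_i''(0).
\]
Substituting $a_i''(0)=(a_i'(0))^2+K$ with $K=-S_2/n+S_1^2/n^2$, using the telescoping identities $\sum_i\big((i-1)\log(i-1)-i\log i\big)=-n\log n$ and $\sum_i C_i=-1$, and expanding $(a_i'(0))^2=\ell_i^2-\tfrac{2S_1}{n}\ell_i+\tfrac{S_1^2}{n^2}$, this reduces after cancellation to $\sigma_{f_D}''(0)=\sum_i C_i\ell_i^2-\tfrac{2S_1}{n}\sum_i C_i\ell_i-\tfrac{S_1^2}{n^2}$.

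Then it remains to put this in closed form. Writing $nC_i=(\log n-1)+g_i$ with $g_i=(i-1)\log(i-1)-i\log i$ and applying the Abel-type reindexing $\sum_i g_i\ell_i^k=\sum_{i=2}^{n-1}i\log i\,(\ell_{i+1}^k-\ell_i^k)-n\log n\,\ell_n^k$ for $k=1,2$, together with $\ell_{i+1}^2-\ell_i^2=\log(d_{i+1}/d_i)\log(d_id_{i+1})$, one obtains the sum $\sum_{i=1}^{n-1} i\log i\,\log(d_{i+1}/d_i)\big(n\log(d_id_{i+1})-2S_1\big)$ plus a residual multiple of $\log n$. Finally one checks the elementary identities $nS_3=nS_2-2nS_1\log d_n+n^2(\log d_n)^2$ and $2(S_1-n\log d_n)^2-nS_3=2S_1^2-2nS_1\log d_n+n^2(\log d_n)^2-nS_2$, which identify that residual $\log n$-coefficient as exactly $-\big(2(S_1-n\log d_n)^2-nS_3\big)$; multiplying through by $1/n^2$ gives the claimed formula.

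The argument is long but entirely mechanical once the proof of Theorem~\ref{T:4.04} is available — conceptually it is just one further differentiation of the same function. The only delicate points are the boundary term at $i=1$ together with the consistent use of $0\log 0=0$, and the bookkeeping in the reindexing step that must fuse two separate $\sum i\log i$ sums into a single sum over $\log(d_{i+1}/d_i)$ plus the $\log n$ term; the main risk of error is a dropped sign when matching that $\log n$-coefficient to $2(S_1-n\log d_n)^2-nS_3$.
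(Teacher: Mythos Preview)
Your proposal is correct, and in fact it goes further than the paper: this statement is presented there as Conjecture~\ref{CJ:4.01}, supported only by symbolic computations for small $n$ and by the simulation results in Table~\ref{T:5.02}, with no proof offered. So there is nothing in the paper to compare your argument against; you are supplying what the paper lacks.

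On the substance, your outline checks out. The analyticity of $\sigma_{f_D}$ near $0$ is immediate from the explicit finite-sum formula in the proof of Theorem~\ref{T:4.04}, and your logarithmic-derivative values $a_i'(0)=\ell_i-S_1/n$ and $a_i''(0)=(\ell_i-S_1/n)^2-S_2/n+S_1^2/n^2$ are correct. The key simplification you found, that the term $-\tfrac1n\sum_i (a_i'(0))^2$ exactly cancels the constant $K=-S_2/n+S_1^2/n^2$ coming from $K\sum_i C_i$ (since $\sum_i C_i=-1$), is what makes the final answer clean: one is left with $\sigma_{f_D}''(0)=\sum_i C_i(a_i'(0))^2$. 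Your Abel reindexing and the identity $S_3=S_2-2S_1\log d_n+n(\log d_n)^2$ then recover the stated formula with the correct sign on the $\log n$ coefficient. The boundary case $i=1$ is handled consistently by $0\log 0=0$, and the $i=1$ term in the conjectured sum vanishes for the same reason, matching your range $2\le i\le n-1$.

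In short: the paper leaves this open, your direct second differentiation of the expression from Theorem~\ref{T:4.04} settles it, and the computation is sound.
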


The next two sections define the slide numbers and assembly numbers and demonstrates their application to some standard point processes. 

\section{The Slide Numbers}\label{S:slide}
Sample data often consists of a set of distinct points in a metric space.  With the help of Definition~\ref{D:4.01}, we can now associate with each of these samples an infinite family of new statistics called the slide numbers. 

\begin{definition}\label{D:5.01} Let M be a metric space and let $U= \{u_1, u_2, \dots, u_k \}$ be a set of k distinct points in $M$. For each $i=1, \dots, k$, let $d_i$ be the distance from $u_i$ to its nearest neighbour in $U$.  Define a sequence $D$ by ordering the $d_i$ in descending order as $ d_{[1]} \geq d_{[2]} \geq \dots \geq d_{[k]} >0$. As in Definition~\ref{D:2.03}, let $f_{D}$ be the function on $[0,1)$ whose value on the interval $[\frac{i-1}{k},\frac{ i}{k})$ is $d_{[i]}$. Define the $n$'th slide number of $U$ by $\rho_n(U) = \psi_n(f_{D})$ and define the slide series of $U$ to be $\sum_{i=1}^{\infty} \frac{\rho_n(U)}{n!}t^n$.
\end{definition}

Values of $\rho_1(U)$ for various random variables are shown in Table~\ref{T:5.01} which shows the connection between $1/\rho_1(U)$ and the Hausdorff dimension of $[0,1]^m$, the Cantor set and the Sierpinski triangle. For certain point processes P, the statistics $\rho_n(U)$ appear to converge as the sample size $U$ gets large.  For example, for any normal random variable $Z$, the quantity $\rho_1(U)$ appears to converge to $4/\pi$ so it makes sense to define $\rho_1(Z)=4/\pi$ and more generally to define $\rho_n(P)$ for an arbitrary point process $P$ as follows.

\begin{definition}\label{D:5.03} Let $M$ be a metric space and let $U = \{u_1, u_2, \dots \}$ be a sequence of distince points in $M$ generated by some point process $P$ and let $ U_k = \{u_1, u_2, \dots, u_k \}$.  If $\rho_n(U_k)$ converges in probability as $k \rightarrow \infty$, then $\rho_n(P)$ is defined to be the value of this limit. If all of the limits $\rho_n(P)$ exist, then we define the slide series of the process $P$ to be $\sum_{i=1}^{\infty} \frac{\rho_n(P)}{n!}t^n$. In the case where $U$ is a sample of a random variable $X$, we will use the notation $\rho_n(X)$ instead of $\rho_n(P)$.
\end{definition}

Some evidence for the convergence of the slide statistics $\rho_n(U)$ is given in  Table~\ref{T:5.01} and Table~\ref{T:5.02}. In these tables, the points in the Cantor set were generated using $\sum_{i=1}^{40} \frac{a_i}{3^i}$  where the $a_i$ are either $0$ or $2$ with probability $1/2$. Points in the Sierpinski triangle were generated using the Chaos Game ~\cite{BM}. The generation of all random numbers used in these simulations was based on The Mersenne Twister. In the case of a random variable $X$ for which $\rho_n(X)$ exists, note that $\rho_n(aX+b)=\rho_n(X)$ for all real number $a$ and $b$ with $a \neq 0$.  In particular, adjusting the mean or standard deviation of a random variable has no effect on $\rho_n(X)$.

\begin{table}[!htb]{Simulated Values of $\rho_1(U)$ for Various Random Variables}
\begin{center}
\begin{tabular}{|c|c|c|c|c|}
 \hline
\textit{Density} & $\mu_1$ &  $\sigma_1$ & $1/\mu_1$  & $\frac{1}{\rho_1}$ \textit{conjectured}    \\
\hline
uniform on $[a,b]$ & $1.0003$ & $0.0111$  & $0.9997$  & $1$   \\
\hline
{normal} & $1.2664$ & $0.129 $  & $0.7896$ & $\pi/4 \simeq 0.785 $  \\
\hline
exponential  & $1.4590$ & $0.0141$ & $0.6854$  & ?  \\
\hline
$1/(2\sqrt{x})$ on $[0,1]$  & $1.2817$ & $0.0132 $  & $0.7802$ & ?  \\
\hline
uniform on $[0,1]^2$ & $0.5023$ & $0.0056$  & $1.9908$ & $2$   \\
\hline
uniform on $[0,1]^3$ & $0.3416$  & $0.0037$  & $2.9274$ & $3$   \\
\hline
uniform on $[0,1]^4$ & $0.2642$ & $0.0029$  & $3.7850$ & $4$   \\
\hline
bivariate normal & $0.7264$ & $0.0073 $  & $1.3766$ & ? \\
\hline
Cantor  & $1.6014$ & $0.0170 $  & $0.6244$ & $\ln(2)/\ln(3)  \approx 0.631$   \\
\hline
Sierpinski  & $0.6344$ & $0.0067 $  & $1.57624$ & $\ln(3)/\ln(2)  \approx 1.5849$   \\
\hline

\end{tabular}
\caption{ For each density, $1000$ samples of size $10000$ were generated and the value of $\rho_1(U)$ was computed for each sample $U$. The mean $\mu_1$ and standard deviation $\sigma_1$ of these $1000$ values for $\rho_1(U)$ are shown. The value given for $1/\rho_1$ is the conjectured limiting value of $1/\mu_1$ as the sample size approaches infinity.}\label{T:5.01}
\end{center}
\end{table}

\begin{table}[!htb]{Simulated Values of $\rho_2(U)$ for Various Random Variables}
\begin{center}

\begin{tabular}{|c|c|c|c|c|}
 \hline
\textit{Density} & $\mu_2$ &  $\sigma_2$  &  $\rho_2$ \textit{conjectured}   \\
\hline
uniform on $[a,b]$ & $-1.6461$ & $.0732$   & $-\pi^2/6 \approx -1.6449$   \\
\hline
normal & $-1.0273$ & $0.0860 $   & 1  \\
\hline
exponential & $-0.7333$ & $0.0920 $   & ?  \\
\hline
$1/(2\sqrt{x})$ on $[0,1]$  & $-2.5792$ & $0.1085$  & ? \\
\hline
uniform on $[0,1]^2$ & $-0.4096$ & $0.0186$   & $-(\pi^2/6)(1/2)^2 \approx -.4112$   \\
\hline
uniform on $[0,1]^3$ & $-0.1825$  & $0.0083$   & $-(\pi^2/6)(1/3)^2 \approx -0.1827$   \\
\hline
uniform on $[0,1]^4$ & $-0.1038$ & $0.0049$   & $-(\pi^2/6)(1/4)^2 \approx 0.1028$   \\
\hline
bivariate normal & $-0.2004$ & $0.0233 $   & ?  \\
\hline
Cantor & $-4.1464$ & $0.1933 $   &  $ \frac{(-1)^{2+1}(2-1)!(2-1)\zeta(2)}{(ln(2)/ln(3))^2} \approx -4.132$   \\ 
\hline
Sierpinski & $-0.6549$ & $0.0295 $   &  $ \frac{(-1)^{2+1}(2-1)!(2-1)\zeta(2)}{(ln(3)/ln(2))^2} \approx -0.655$   \\ 
\hline

\end{tabular}
\caption{ For each density, $1000$ samples of size $10000$ were generated and the value of $\rho_2(U)$ was computed for each sample $U$ using   Conjecture~\ref{CJ:4.01}. The mean $\mu_2$ and standard deviation $\sigma_2$ of these $1000$ values for $\rho_2(U)$ are shown. The value given for $\rho_2$ is the conjectured limiting value of $\mu_2$ as the sample size approaches infinity. }\label{T:5.02}
\end{center}
\end{table}

Consider the following outline for a possible argument to explain the empirical results obtained for $[0,1]^m$ in Table~\ref{T:5.01} and Table~\ref{T:5.02}.  Suppose  $U = \{u_1, u_2, \dots, u_k \}$ is a very large number of points chosen uniformly at random from  $[0,1]^m$ and let $x$ be a particular point in $U$. By \cite{SKM}, the probability that a point is within $r$ of $x$ is approximately $1-e^{-\alpha r^m}$ for some $\alpha$. If the sample $U$ is large enough, then the set of nearest neighbour distances will be sufficiently independent \cite{K,S} that their empirical cumulative distribution will also be approximately equal to $1-e^{-\alpha r^m}$. If $D$ is the ordered sequence of nearest neighbour distances, then $L_{D^*}(r)$ in Definition~\ref{D:2.03} will be approximately $1-e^{-\beta r^m}$ for some $\beta$ and $1-L_{D^*}(r)$ will be approximately $e^{-\beta r^m}$.  By part (1) of Theorem~\ref{T:2.03}, $f_{D^*}(x)$ is a generalized inverse of $1-L_{D^*}(r)$ so $f_{D^*}(x)$ should be approximately $\frac{(-\log(x))^\frac{1}{m}}{\Gamma(1+\frac{1}{m})}$. Corollary~\ref{C:4.01} now suggests the following conjecture which is supported by the empirical results shown in Table~\ref{T:5.01} and Table~\ref{T:5.02}.

\begin{conjecture}\label{CJ:5.01} Let $U = (u_1, u_2, \dots )$ be a sequence of points chosen uniformly at random from $[0,1]^m$ and let $ U_k = \{u_1, u_2, \dots, u_k \}$. Then as $k \rightarrow \infty$,  $\rho_1(U_k)$ converges in probability to $1/m$ and  $\rho_n(U_k)$ converges in probability to $(-1)^{n+1}(n-1)!(n-1)\zeta(n)/m^n$ for $n >1$.
\end{conjecture}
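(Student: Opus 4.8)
The plan is to make rigorous the heuristic given just before the conjecture, in three stages: first identify the limiting law of the nearest--neighbour distances and prove a Glivenko--Cantelli statement for their empirical distribution; then control the normalising constant $\mu$ so as to pin down the scale--free limit of $1-L_{D^*}$; and finally prove that the slide derivatives $\psi_n$ are stable under this convergence, so that $\rho_n(U_k)=\psi_n(f_{D^*_k})$ converges to $\psi_n$ of the limiting corner density, which Corollary~\ref{C:4.01} evaluates to exactly $1/m$ when $n=1$ and to $(-1)^{n+1}(n-1)!(n-1)\zeta(n)/m^n$ when $n>1$.

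For the first stage, fix a point $u_i\in U_k$; conditionally on its location, the probability that its nearest--neighbour distance $R_i$ exceeds $r$ is $(1-V_i(r))^{k-1}$, where $V_i(r)$ is the volume of the ball of radius $r$ about $u_i$ intersected with $[0,1]^m$. For $u_i$ at distance more than $r$ from the boundary this is $(1-\omega_m r^m)^{k-1}$ with $\omega_m$ the volume of the unit $m$--ball, so under the scaling $r\mapsto r\,k^{-1/m}$ one gets $P(k^{1/m}R_i>r)\to e^{-\omega_m r^m}$; the points lying within $O(k^{-1/m})$ of the boundary are a vanishing fraction of $U_k$, so they contribute nothing in the limit, which is where \cite{SKM} enters. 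Upgrading this marginal statement to convergence of the empirical distribution function of $\{k^{1/m}R_i\}$ requires a law of large numbers for sums $\frac1k\sum_i g(k^{1/m}R_i)$ of the sort available for short--range dependent geometric functionals \cite{K,S}; applied with $g=\mathbf{1}_{(-\infty,r]}$ it yields pointwise convergence of $L_D(r\,k^{-1/m})$, which monotonicity together with continuity of the limit promotes to uniform convergence in probability.

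For the second stage the same law of large numbers with $g(z)=z$ gives $k^{1/m}\mu\to\int_0^\infty e^{-\omega_m r^m}\,dr$ in probability, and hence, letting $\beta$ be the constant fixed by $\int_0^\infty e^{-\beta r^m}\,dr=1$ (namely $\beta=\Gamma(1+1/m)^m$), the functions $1-L_{D^*_k}$ converge uniformly in probability to $e^{-\beta r^m}$. By part~(1) of Theorem~\ref{T:2.03}, $f_{D^*_k}$ is the generalized inverse of $1-L_{D^*_k}$, so $f_{D^*_k}$ converges pointwise on $(0,1)$ to $f_\infty(x)=\beta^{-1/m}(-\log x)^{1/m}$; by the scale invariance in Theorem~\ref{T:4.01} the slide function of $f_\infty$ coincides with that of $(-\log x)^{1/m}=f_{1/m}$ in the notation of Corollary~\ref{C:4.01}, whose slide derivatives are $\psi_1=1/m$ and $\psi_n=(-1)^{n+1}(n-1)!(n-1)\zeta(n)/m^n$ for $n>1$, precisely the asserted limits.

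The remaining stage is the hard one: deducing from pointwise (indeed uniform) convergence of the corner densities $f_{D^*_k}$ the convergence of the $n$--th right--hand derivative at $0$ of the slide functions $\sigma_{f_{D^*_k}}$. Taking derivatives is not a continuous operation on a merely pointwise--convergent family, so one must prove, uniformly in $k$, the integrability and equicontinuity estimates that appear in the proof of Lemma~\ref{L:4.01}: a uniform bound on $\int_0^1(f_{D^*_k})^s\,dx$ for some fixed $s>0$, and uniform control of the contributions of $(f_{D^*_k})^t\ln x$ and $(f_{D^*_k})^t\ln((f_{D^*_k})^t)$ near the two endpoints, for $t$ in a fixed one--sided neighbourhood of $0$. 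These would bound the $\sigma_{f_{D^*_k}}$ in $C^{n}$ near $0$, so an Arzel\`a--Ascoli argument combined with the identification of the limit as $\sigma_{f_\infty}$ forces $\psi_n(f_{D^*_k})\to\psi_n(f_\infty)$. The delicate feature is that the family is not uniformly bounded below near $x=1$ (the smallest step $d^*_{[k]}$ is of order $k^{-1/m}$, not of constant order) nor uniformly bounded above near $x=0$ (the largest step is of order $(\log k)^{1/m}$), so the required uniform integrability must be extracted from the Weibull asymptotics $d^*_{[i]}\asymp(\log(k/i))^{1/m}$ of the order statistics rather than from crude pointwise bounds. For $n=1$ one can bypass this last stage by substituting those asymptotics directly into the closed formula of Theorem~\ref{T:4.04} and recognising the two sums as Riemann sums; the subtlety there is the exact cancellation of the divergent $\log k$ contributions between the two sums --- mirroring the $P_1,P_2,P_3$ bookkeeping in the proof of Theorem~\ref{T:4.04} --- and the handling of indices $i$ near $k$, and I expect this cancellation, or equivalently the endpoint uniform integrability in the general approach, to be the principal obstacle.
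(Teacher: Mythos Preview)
The paper does not prove this statement: it is explicitly a \emph{conjecture}, supported only by the heuristic paragraph immediately preceding it and by the simulation evidence in Tables~\ref{T:5.01} and~\ref{T:5.02}. In Section~\ref{S:conclusion} the author lists a proof of even the $m=1$ case as desirable future work and writes that ``we do not expect any argument concerning convergence in probability to be straightforward especially with statistics as complex as the $\rho_n$.'' There is thus no proof in the paper to compare your proposal against.

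Your proposal is a careful and faithful elaboration of the paper's own heuristic, and you have correctly located where the real difficulty lies: the passage from convergence of the corner densities $f_{D^*_k}$ (which your first two stages would plausibly deliver) to convergence of their slide derivatives $\psi_n$. You are right that this is not automatic, and the paper offers nothing beyond the heuristic here. You are also honest that you have not closed this gap --- you call the third stage ``the hard one'' and flag the endpoint uniform integrability and the $\log k$ cancellation in the $n=1$ closed formula as the principal obstacles. But an Arzel\`a--Ascoli argument of the kind you sketch would need a uniform $C^n$ bound on the family $t\mapsto\sigma_{f_{D^*_k}}(t)$ in a one-sided neighbourhood of $0$, and you have not established this; the estimates in Lemma~\ref{L:4.01} are for a single fixed $f$, not uniform over the family, and the behaviour of $d^*_{[k]}$ and $d^*_{[1]}$ that you yourself point out is exactly what makes uniformity delicate. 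So what you have written is a well-structured attack plan on a problem the paper leaves open, not yet a proof.
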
 

There appear to be cases other than $[0,1]^m$ for which the dimension equals $m$ and $\rho_n$ converges to $(-1)^{n+1}(n-1)!(n-1)\zeta(n)/m^n$.  For example, if we take $m=\log(2)/\log(3)$ then $(-1)^{2+1}(2-1)!(2-1)\zeta(2)/m^2 \approx -4.132$ which is close to the value shown in Table~\ref{T:5.02} for the  Cantor set. A similar result holds for the Sierpinski triangle and prompts us to make the following definition. 

\begin{definition}\label{D:5.04} In the context of Definition~\ref{D:5.03}, we say that a point process $P$ is tangible provided there is a number $d$ with $\rho_1(P)=1/d$  and $\rho_n(P)=(-1)^{n+1}(n-1)!(n-1)\zeta(n)/d^n$ for $n>1$. The number $d$ will be called the slide dimension of the process.  If there is no such number, the process will be called intangible. 
\end{definition}

According to the values for $\rho_1$ and $\rho_2$ given in Table~\ref{T:5.01} and Table~\ref{T:5.02}, the normal distribution does not satisfy the conditions for tangibility in  Definition~\ref{D:5.04} so cannot be assigned a slide dimension. Also, if we substitute the estimates for $\rho_2$ from Table~\ref{T:5.02} into the inverse function $\frac{\pi}{\sqrt{-6\rho_2}}$, we get considerably better estimates for the dimension of $[0,1]^m$ than the values for $1/\mu_1$ shown in Table~\ref{T:5.01}. For a tangible process $P$, we would like to know in general if the statistics $\sqrt[n]{\frac{(-1)^{n+1}(n-1)!(n-1)\zeta(n)}{\rho_n}}$ converge more quickly to the dimension for larger values of $n$.

For the subset $U$ of $R$ generated by $20000$ iterations of $x_{i+1}=x_i+cos(i)$ with $x_0=0$, the value of $\rho_1(U)$ is approximately $0.53$. But values of $\rho_1(U)$ less than $1$ cannot occur for continuous real-valued random variables according to the next conjecture which is supported by the results in Table~\ref{T:5.01}. 

\begin{conjecture}\label{CJ:5.03} If $X$ is a continuous real-valued random variable for which $\rho_1(X)$ exists, then $\rho_1(X) \geq 1$.
\end{conjecture}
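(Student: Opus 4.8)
The plan is to reduce the inequality, by way of the heuristic behind Conjecture~\ref{CJ:5.01}, to a one‑variable correlation inequality and then invoke Chebyshev's integral inequality. Throughout I assume $X$ has a density $p$ on $\mathbb{R}$; a continuous but purely singular law would have to be handled separately, where the relevant obstruction is a fractal dimension $\le 1$ (as in the Cantor example). \textbf{Step 1 (the limiting corner density).} Exactly as in the discussion preceding Conjecture~\ref{CJ:5.01}, a sample point of a large sample $U_k$ sitting near a location $x$ has nearest‑neighbour distance approximately exponential with rate $2kp(x)$ (Poisson approximation, \cite{SKM}), so the empirical survival function satisfies $1-L_{D_k}(r)\approx\int_{\mathbb R}p(x)e^{-2kp(x)r}\,dx$. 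After the harmless rescaling $r=s/(2k)$ (scale invariance, Theorem~\ref{T:4.01}) and using part~(1) of Theorem~\ref{T:2.03} to read off the generalized inverse, one is led to
\[
\rho_1(X)=\psi_1(\phi),\qquad \phi(x)=\inf\{s\ge 0:\ H(s)\le x\},\qquad H(s)=E\big(e^{-s\,p(X)}\big)=\int_0^\infty e^{-us}\,d\mu(u),
\]
where $\mu$ is the law of $Y=p(X)$, a probability measure on $(0,\infty)$. Since $\mu$ has no atom at $0$, $H$ is continuous and strictly decreasing from $1$ to $0$, so $\phi=H^{-1}$.

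\textbf{Step 2 (a closed form for $\psi_1$).} Writing $A(t)=\int_0^1 f^t$, $B(t)=\int_0^1 f^t\ln x\,dx$, and $C(t)=\int_0^1 f^t\ln f\,dx=A'(t)$, Definition~\ref{D:4.00} gives $\sigma_f(t)=-1-B(t)/A(t)-tC(t)/A(t)+\ln A(t)$. Differentiating at $t=0$, where $A(0)=1$ and $B(0)=-1$, almost everything cancels and one obtains
\[
\psi_1(f)=-\int_0^1 \ln f(x)\,(1+\ln x)\,dx .
\]
This is consistent with Corollary~\ref{C:4.01}, since $f(x)=-\ln x$ yields $\psi_1(f)=1$; note also $\int_0^1(1+\ln x)\,dx=0$, which encodes the scale invariance $\psi_1(\lambda f)=\psi_1(f)$.

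\textbf{Step 3 (Chebyshev).} Subtracting the identity $1=-\int_0^1\ln(-\ln x)(1+\ln x)\,dx$ and then substituting $x=e^{-u}$ gives
\[
\psi_1(\phi)-1=\int_0^1\ln\frac{-\ln x}{\phi(x)}\,(1+\ln x)\,dx=\int_0^\infty g(u)\,(1-u)\,e^{-u}\,du,\qquad g(u):=\ln\!\Big(u/\phi(e^{-u})\Big).
\]
Now $H$ is log‑convex, because $H(\tfrac{s+t}{2})^2\le H(s)H(t)$ by Cauchy--Schwarz, so $W:=-\ln H$ is concave and increasing with $W(0)=0$; hence its inverse $q:=W^{-1}$, which satisfies $q(u)=\phi(e^{-u})$, is convex and increasing with $q(0)=0$, so $u\mapsto q(u)/u$ is non‑decreasing and therefore $g$ is non‑increasing. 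Since $u\mapsto 1-u$ is also non‑increasing, $e^{-u}\,du$ is a probability measure on $(0,\infty)$, and $\int_0^\infty(1-u)e^{-u}\,du=0$, Chebyshev's integral inequality for two identically monotone functions gives
\[
\int_0^\infty g(u)(1-u)e^{-u}\,du\ \ge\ \Big(\int_0^\infty g(u)e^{-u}\,du\Big)\Big(\int_0^\infty(1-u)e^{-u}\,du\Big)=0 ,
\]
so $\rho_1(X)=\psi_1(\phi)\ge 1$. If $\psi_1(\phi)=\infty$ the bound is trivial, and truncating $g$ at $-M$ makes the last display rigorous even when $g\notin L^1(e^{-u}du)$; equality forces $g$, hence $q$, to be linear, i.e. $\mu$ a point mass, i.e. $X$ uniform on its support.

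\textbf{Main obstacle.} Steps 2 and 3 are essentially routine calculus together with a textbook inequality; the entire difficulty lies in Step 1, i.e. in proving that $\rho_1(U_k)$ really converges in probability to $\psi_1(\phi)$. This is of the same order as Conjecture~\ref{CJ:5.01} itself and would require: a quantitative form of the ``sufficient independence'' of nearest‑neighbour distances cited from \cite{K,S}; uniform (not merely pointwise) control of the empirical nearest‑neighbour survival function on the $1/k$ scale; control of boundary effects and of densities with $\int p^2=\infty$ or with $p$ unbounded; and verification that the passage from the discrete formula of Theorem~\ref{T:4.04} to the integral of Step 2 survives these limits. I expect the independence input to be the crux; granted it, the chain $\rho_1(X)=\psi_1(\phi)\ge 1$ follows from the clean argument above and, as a bonus, identifies the uniform distribution as the unique minimiser.
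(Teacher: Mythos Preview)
The statement is a \emph{conjecture} in the paper; there is no proof there to compare against, only the supporting evidence of Table~\ref{T:5.01}. So the relevant question is simply whether your proposal settles it.

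Your Steps~2 and~3 are correct and constitute real progress. The formula $\psi_1(f)=-\int_0^1(\ln f)(1+\ln x)\,dx$ follows exactly as you say from differentiating $\sigma_f(t)=-1-B(t)/A(t)-tC(t)/A(t)+\ln A(t)$ at $t=0$, and the check against Corollary~\ref{C:4.01} is reassuring. The Chebyshev argument in Step~3 is clean: log-convexity of the Laplace transform $H$ makes $q=(-\ln H)^{-1}$ convex through the origin, so $g(u)=\ln(u/q(u))$ is non-increasing, and the product with the non-increasing $1-u$ against the probability weight $e^{-u}\,du$ gives the inequality, with equality exactly when $q$ is linear, i.e.\ $p(X)$ is a.s.\ constant, i.e.\ $X$ is uniform on an interval. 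This is a genuinely attractive reduction that the paper does not have.

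The gap is precisely where you place it: Step~1. The identification $\rho_1(X)=\psi_1(\phi)$ is at present only the heuristic preceding Conjecture~\ref{CJ:5.01}, and that heuristic is itself open in the paper even for the uniform case. What you would need is not just convergence of the empirical nearest-neighbour survival function to $H$ but convergence of the slide derivative, which by your own Step~2 formula means controlling $\int_0^1(\ln f_{D_k})(1+\ln x)\,dx$; the weight $1+\ln x$ blows up at $0$, so the largest order statistics of the nearest-neighbour distances carry disproportionate weight, and it is exactly there that the Poisson/independence approximations from \cite{K,S,SKM} are most delicate. Until that convergence is established, your argument proves the sharp inequality $\psi_1(\phi)\ge 1$ for the \emph{putative} limit $\phi$, and identifies the equality case, but does not prove Conjecture~\ref{CJ:5.03} itself.
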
 

The values of $\rho_2(X)$ are all negative in Table~\ref{T:5.02} and we know from Theorem~\ref{T:4.02} that $\rho_1(X) \geq 0$ so there is some support for the following conjecture.

\begin{conjecture}\label{CJ:5.04} If $X$ is a continuous real-valued random variable for which $\rho_n(X)$ exists, then $ (-1)^{n+1}\rho_n(X) \geq 0$.
\end{conjecture}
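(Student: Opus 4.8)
The plan is to reduce the conjecture to a complete‑monotonicity statement about the slide function of the limiting object, and then attack that statement through an explicit Laplace‑type representation. Granting that $\rho_n(X)$ exists, I would first identify it with $\psi_n(q)$ for a single corner density $q$: if $V$ denotes the re‑scaled nearest‑neighbour distance of a large sample, then the empirical cdf $L_{D^*}$ of Definition~\ref{D:2.03} tends to $F_V$, so by part (1) of Theorem~\ref{T:2.03} the density $f_{D^*}$ tends to the reversed quantile function $q(x)=Q_V(1-x)$ on $(0,1)$, which is a corner density after normalization since $\int_0^1 q = E[V]<\infty$. Assuming $\psi_n$ is continuous along this convergence (the same heuristic underlying Conjecture~\ref{CJ:5.01}), we get $\rho_n(X)=\psi_n(q)=\sigma_q^{(n)}(0^+)$. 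Now the assertion $(-1)^{n+1}\sigma_q^{(n)}(0^+)\ge 0$ for every $n\ge1$ is exactly the statement that, for $m\ge0$, the coefficient of $t^m$ in the Taylor expansion of $\sigma_q'$ at $0$ is $(-1)^m$ times a nonnegative number; equivalently, $\sigma_q'$, analytically continued, is completely monotone on a left neighbourhood of $0$. The case $n=1$ is already part (1) of Theorem~\ref{T:4.02}, so the content is $n\ge 2$.

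The key simplification is that $\sigma_q'$ has a clean form. Writing $A(t)=\int_0^1 q^t\,dx=E[V^t]$, $K(t)=\ln A(t)$, and $B(t)=\int_0^1 q(x)^t\ln x\,dx=E[V^t\ln S_V(V)]$, Definition~\ref{D:4.00} gives $\sigma_q(t)=-1-B(t)/A(t)-tK'(t)+K(t)$, and differentiating collapses this to
\[
\sigma_q'(t)=C'(t)-t\,K''(t),\qquad C(t):=-1-\tfrac{B(t)}{A(t)},\quad C(0)=0,
\]
where $K''(t)\ge 0$ is a variance. In the model case $V\sim\mathrm{Exp}(1)$, i.e. $q=-\ln x$, one has $A(t)=\Gamma(1+t)$, $B(t)=-\Gamma(2+t)$, hence $C(t)=t$ and $\sigma_q'(t)=1-t\,\Psi'(1+t)$; using $\Psi'(1+t)=\sum_{k\ge1}\int_0^\infty s\,e^{-(k+t)s}\,ds$ and integrating by parts one obtains
\[
\sigma_q'(t)=\int_0^\infty\bigl(-\phi'(s)\bigr)e^{-ts}\,ds,\qquad \phi(s)=\frac{s}{e^s-1}=\frac{s\,S_V(s)}{1-S_V(s)},
\]
with $-\phi'(s)>0$ on $(0,\infty)$, so $\sigma_q'$ is completely monotone and its Taylor coefficients alternate, which recovers the slide derivatives of Theorem~\ref{T:4.03} together with the conjectured sign pattern.

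The task is then to carry this through for the densities $q$ that actually arise. The crucial probabilistic input, valid precisely because $X$ is real‑valued, is that the limiting re‑scaled nearest‑neighbour distance $V$ is a scale mixture of exponentials — locally the distance to the nearest point is exponential with rate proportional to the local density, and one mixes over the base point (the standard Poisson/local approximation used elsewhere in this paper, cf. \cite{SKM,K,S}) — so that $S_V$ is completely monotone. The plan is to show that complete monotonicity of $S_V$ forces the Taylor coefficients of $\sigma_q'$ at $0$ to alternate, ideally by expanding $A(t)$, $B(t)$, $K''(t)$ against the mixing measure of $S_V$ and re‑summing into a representation of $\sigma_q'$ of Laplace‑transform type. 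As further evidence that this is the right mechanism, for $S_V(s)=(1+s)^{-2}$ (the mixture $\Lambda\sim\mathrm{Gamma}(2,1)$) one computes $A(t)=\Gamma(1+t)\Gamma(2-t)$, $B(t)/A(t)=2(\Psi(2-t)-\Psi(3))$, hence $\sigma_q'(t)=2\Psi'(2-t)-t\,(\Psi'(1+t)+\Psi'(2-t))$, whose Taylor coefficients at $0$ do alternate as required.

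The main obstacle is this last step: $q\mapsto\sigma_q$ is a genuinely nonlinear functional (it involves $q^t$, a logarithm, and a normalization), so the mixture‑of‑exponentials structure of $S_V$ does not pass through it transparently; the terms $C'(t)$ and $t\,K''(t)$ have to conspire, as one already sees in the $\mathrm{Gamma}(2,1)$ example where the all‑positive contribution $2\Psi'(2-t)$ is corrected at odd orders by $-t\,(\Psi'(1+t)+\Psi'(2-t))$. Subsidiary difficulties, real but routine by comparison, are (i) justifying the reduction in the first paragraph — the asymptotic independence and the mixture‑of‑exponentials description of nearest‑neighbour distances are only heuristic here, and one needs continuity of $\psi_n$ under the relevant convergence — and (ii) extending the continuity analysis of Lemma~\ref{L:4.01} to all derivative orders, so that the one‑sided derivatives $\sigma_q^{(n)}(0^+)$ exist and equal the limits of $(\sigma_q')^{(m)}(t)$ as $t\to0^+$, which is what converts the pointwise statement about $\sigma_q'$ into the sign pattern on its Taylor coefficients.
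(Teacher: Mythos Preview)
The statement you are attempting to prove is labeled a \emph{Conjecture} in the paper and is left open there; the only support the paper offers is the sign pattern observed in the simulations of Tables~\ref{T:5.01} and~\ref{T:5.02}, together with the remark that $\rho_1\ge 0$ is immediate from Theorem~\ref{T:4.02}(1). There is therefore no proof to compare your proposal against.

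Read as a research program rather than a proof, your outline is reasonable and you are candid about its status. The reduction to a single limiting corner density $q$ and the identity $\sigma_q'(t)=C'(t)-t\,K''(t)$ are correct and genuinely simplify the problem; the Laplace representation $\sigma_q'(t)=\int_0^\infty(-\phi'(s))e^{-ts}\,ds$ with $\phi(s)=s/(e^s-1)$ in the exponential case is right and gives a clean explanation of the sign pattern in Theorem~\ref{T:4.03}. The probabilistic input---that for a continuous real-valued $X$ the rescaled nearest-neighbour distance is, in the Poisson limit, a scale mixture of exponentials, so $S_V$ is completely monotone---is the natural place where the hypothesis ``real-valued'' enters, and it is the right heuristic.

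The gap you name is the real one, and nothing in the paper closes it. Complete monotonicity of $S_V$ does not pass in any obvious way through the nonlinear map $q\mapsto\sigma_q'$. Your own $\mathrm{Gamma}(2,1)$ example already shows the difficulty: there $C'(t)=2\Psi'(2-t)$ has \emph{all} Taylor coefficients positive at $t=0$, so the required negativity at odd orders of $\sigma_q'$ must come entirely from a cancellation with $-t\,K''(t)$ that is specific to $q$ being the reversed quantile of a completely monotone survival function. Two worked examples do not supply a mechanism for that cancellation. The subsidiary issues you list---making the limit $\rho_n(X)=\psi_n(q)$ rigorous, and upgrading Lemma~\ref{L:4.01} to higher derivatives---are also genuinely open in the paper (they underlie Conjecture~\ref{CJ:5.01} as well), so even the reduction in your first paragraph is, at present, heuristic.
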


\section{The Assembly Numbers}\label{S:assembly}
 
In analyzing the spatial characteristics of a set of points, it is sometimes preferrable to use all of the interpoint distances \cite{BF,S} rather than just the distances to nearest neighbours as we have done so far. The assembly numbers, that we now define, are like the slide numbers from Definition~\ref{D:5.01} except that we use the distances between every pair of points.

\begin{definition}\label{D:6.01} Let M be a metric space and let $U= \{u_1, u_2, \dots, u_k \}$ be a set of k distinct points in $M$. Let $D$ be the sequence ${d_1 , d_2 , \dots , d_m}$ of distances between each pair of points in $U$ where we assume that $ d_1 \geq d_2 \geq \dots \geq d_m > 0$. Let $f_{D}$ be the function on $[0,1)$ whose value on the interval $[\frac{i-1}{m},\frac{ i}{m})$ is $d_i$  as in Definition~\ref{D:2.03}. Define the nth assembly number of $U$ by $\alpha_n(U) = \psi_n(f_{D})$ and define the assembly series of $U$ to be $\sum_{i=1}^{\infty} \frac{\alpha_n(U)}{n!}t^n$.
\end{definition}

Table~\ref{T:6.01} illustrates that the values of $\alpha_1(U)$ are remarkably stable even for very small samples. In general, it appears that $\alpha_1(U)$ converges much faster than $\rho_1(U)$ which might make  $\alpha_1(U)$ a better spatial statistic for small samples.

\begin{table}[!htb]{Simulated Values of $\alpha_1(U)$ for Samples in $[0,1]^m$}
\begin{center}
\begin{tabular}{|c|c|c|c|c|c|}
 \hline
\textit{ Sample Size} & $[0,1]$ & $[0,1]^2$ &  $[0,1]^3$ &  $[0,1]^4$   \\
\hline
$10$  & $0.7607 \pm 0.0880$ & $0.4415 \pm 0.0465$ & $0.3310 \pm 0.0363$   & $0.2746 \pm 0.0313$    \\
\hline
$20$   & $0.7785 \pm 0.0476$ & $0.4533 \pm 0.0238$ & $0.3410 \pm 0.0197$   & $0.2827 \pm 0.0172$    \\
\hline
$50$   & $0.7856 \pm 0.0209$ & $0.4596 \pm 0.0112$ & $0.3458 \pm 0.0091$   & $0.2868 \pm 0.0085$   \\
\hline
$100$   & $0.7883 \pm 0.0116$ & $0.4612 \pm 0.0061$ & $0.3474 \pm 0.0056$  & $0.2880 \pm 0.0055$  \\
\hline

\end{tabular}
\caption{ The mean and standard deviation of the statistic $\alpha_1(U)$ for $1000$ samples from uniform distributions on $[0,1]^m$. }\label{T:6.01}
\end{center}
\end{table}

Table~\ref{T:6.02} shows some of the intriguing values obtained for the first and second assembly numbers from simulations. These results may prove useful in various goodness of fit tests particularly because the standard deviations shown here are so small.

\begin{table}[!htb]{Simulated values of $\alpha_1(U)$ and $\alpha_2(U)$ for various random variables}
\begin{center}
\begin{tabular}{|c|c|c|c|c|}
 \hline
 \textit{Random Variable } & $\mu_1$ & $\mu_2$ &  $1/\mu_1$  \\
\hline
$u$  & $0.7897\pm 0.0023 \approx \pi /4$  & $-1.5589\pm0.0190$ & $1.2662$     \\
\hline
 {Normal} & $0.8113\pm 0.0049 \approx 8/\pi^2 $  & $-1.4395\pm0.0074$ & $1.2325$     \\
\hline
$\log(u)$  & $0.9987\pm 0.0166 \approx 1$ & $-1.6491\pm0.0201 \approx -\pi^2 /6$ & $1.0013$    \\
\hline
$( \sin(2\pi u), \cos(2\pi u))$  & $0.5205 \pm 0.0009 \approx \pi /6$ & $-1.2739 \pm 0.0064 \approx -4/\pi $ & $1.9210 $    \\
\hline
$(u \sin(2\pi v),u \cos(2\pi v))$  & $0.5078 \pm 0.0058$ & $-0.5073 \pm 0.0172$ & $1.9692 $    \\
\hline
$(u,v)$  & $0.4624 \pm 0.0011$ & $-0.4525\pm0.0058$ & $2.1626 $    \\
\hline
$(u,v,w)$  & $0.3482 \pm 0.0013$ & $-0.2262\pm0.0022$ & $2.8715 $    \\
\hline
$\{(u,v)|u^2+v^2 \leq1\}$  & $0.4548 \pm 0.0006$ & $-0.4497\pm0.0056$ & $2.1987 $    \\
\hline
{Bivariate Normal}  & $0.4998 \pm 0.0041  \approx 1/2$ & $-0.4113\pm0.0030$ & $2.0008 $    \\
\hline

\end{tabular}
\caption{ The average values $\mu_1$ and  $\mu_2$ of the statistics $\alpha_1(U)$ and $\alpha_2(U)$. The random variables $u$,$v$ and $w$ are independent and uniform in $[0,1]$. In each case $1000$ samples of size $1000$ were used. }\label{T:6.02}
\end{center}
\end{table}

There is an alternative approach to the slide calculus and the slide statistics that we now consider briefly. 
\section{The Level Calculus}\label{S:level}

Here is a variation on Definition~\ref{D:4.01} that accomplishes the deformation of a corner density to a constant function in a linear way.

\begin{definition} \label{D:7.01} Let $f:(0,b) \rightarrow (0,\infty)$ be a corner density. For $t \in [0,1]$, define the level function of $f$ by $\lambda_f(t) =G(tf(x)+(1-t)/b)  = -1 -\int_0^b (tf(x)+(1-t)/b)\ln(x(tf(x)+(1-t)/b))dx$, for values of $t$ at which this integral exists.
\end{definition}

The level derivatives are now defined in a similar way to the slide derivatives in Definition~\ref{D:4.01}.

\begin{definition} \label{D:7.02} Let $f:(0,b) \rightarrow (0,\infty)$ be a corner density. The $n$'th level derivative of $f$ is defined by $\lambda_n(f)=\frac{d^n \lambda_{f}}{dt^n}(0)$ where all derivatives are taken from the right. If all of these derivatives exist then the level series of $f$ is defined to be $\sum_{i=1}^{\infty} \frac{\lambda_n(f)}{n!}t^n$.
\end{definition}

Unlike the situation for the slide derivatives,  all of the level derivatives of corner densities of the form $f_{D^*}$ in Definition~\ref{D:2.03} are easily described.  The next theorem follows from a routine calculation of the derivatives in Definition~\ref{D:7.02}.

\begin{theorem}\label{T:7.01} Suppose  $ D=(d_1 ,d_2, \dots, d_n ) $ is a sequence of points where we assume $d_1 \geq d_2 \geq \dots \geq d_n \geq 0$ and the $d_i$ are not all zero. Let $\mu$ be the average of the $d_i$ and let $f$ be the corner density on $[0,1)$ whose value on the interval $[\frac{i-1}{n},\frac{ i}{n})$ is $d_i/\mu$.  Then the first level derivative of $f$ is given by $\lambda_1(f)=\sum_{i=1}^{n}(1-d_i/\mu) \big(  (\frac{ i}{n})\ln(\frac{ i}{n})-(\frac{ i-1}{n})\ln(\frac{ i-1}{n}) \big) = -1- \int_0^1 f(x) \log(x) dx = -1 - E(log(x))$. For $n>1$, the level derivatives are given by $\lambda_n(f)=-\frac{ 1}{n}\sum_{i=1}^{n}(1-d_i/\mu)^n  = - \int_0^1 (1-f(x))^n  dx $. 
\end{theorem}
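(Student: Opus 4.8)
The plan is to compute $\lambda_f(t)$ explicitly for the step function $f = f_{D^*}$ and then differentiate at $t=0$ from the right. Write $c_i = d_i/\mu$ for the value of $f$ on $[(i-1)/n, i/n)$, and set $h_t(x) = t f(x) + (1-t)/b$ with $b = 1$; note $h_t$ takes the value $g_i(t) := t c_i + (1-t) = 1 - t(1 - c_i)$ on the interval $[(i-1)/n, i/n)$. Since $h_t$ is a step function, the integral defining $\lambda_f(t) = -1 - \int_0^1 h_t(x)\ln(x h_t(x))\,dx$ breaks into a finite sum over the intervals, and on each interval one uses the antiderivative $\int c\ln(xc)\,dx = cx\ln(xc) - cx$ (with the convention $0\ln 0 = 0$ handling the left endpoint of the first interval). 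This yields a closed form for $\lambda_f(t)$ as a finite sum involving the terms $g_i(t)\bigl((i/n)\ln(i/n) - ((i-1)/n)\ln((i-1)/n)\bigr)$ together with a telescoping contribution from the $-cx$ pieces and the leading $-1$; the telescoping should cancel the $-1$, leaving $\lambda_f(t) = \sum_{i=1}^n g_i(t)\bigl((i/n)\ln(i/n) - ((i-1)/n)\ln((i-1)/n)\bigr) - 1$ up to bookkeeping, which one must verify carefully.

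For the first derivative, differentiate term by term: $\frac{d}{dt}g_i(t) = c_i - 1 = -(1 - d_i/\mu)$ is constant in $t$, so $\lambda_1(f) = \sum_{i=1}^n (c_i - 1)\bigl((i/n)\ln(i/n) - ((i-1)/n)\ln((i-1)/n)\bigr)\cdot(-1)\cdot(-1)$, i.e. $\sum_{i=1}^n (1 - d_i/\mu)\bigl((i/n)\ln(i/n) - ((i-1)/n)\ln((i-1)/n)\bigr)$, matching the claimed first expression once signs are tracked. To get the second form, recognize this sum as $-1 - \int_0^1 f(x)\ln x\,dx$: indeed $\sum_i (i/n)\ln(i/n) - ((i-1)/n)\ln((i-1)/n)$ telescopes to $0$ (at $n/n = 1$, $\ln 1 = 0$), so the sum equals $-\sum_i (d_i/\mu)\bigl((i/n)\ln(i/n) - ((i-1)/n)\ln((i-1)/n)\bigr)$, and the latter is $-\int_0^1 f(x)\ln x\,dx$ minus a boundary term that reconstitutes the $-1$ via $\int_0^1 \ln x\,dx = -1$; more directly, $-1 - \int_0^1 f\ln x\,dx = \int_0^1(1 - f(x))\ln x\,dx + (-1 - \int_0^1 \ln x\,dx) = \int_0^1(1-f)\ln x\,dx$, and evaluating this last integral on the step function gives exactly the telescoping sum. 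Since $f$ is a density, $-1 - \int f\ln x = -1 - E(\ln X) = -1 - E(\log x)$ in the paper's notation.

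For $n > 1$, the key simplification is that $g_i(t)$ is \emph{linear} in $t$, so $\frac{d^n}{dt^n}$ of any single term $g_i(t)\cdot(\text{const})$ vanishes for $n \ge 2$. Hence only the terms where $h_t(x)$ appears inside the logarithm can contribute. Going back to $\lambda_f(t) = -1 - \int_0^1 h_t \ln(x h_t)\,dx = -1 - \int_0^1 h_t\ln x\,dx - \int_0^1 h_t\ln h_t\,dx$, the first two pieces are linear (resp. affine) in $t$ and die under $d^n/dt^n$ for $n \ge 2$, leaving $\lambda_n(f) = -\frac{d^n}{dt^n}\big|_{t=0^+}\int_0^1 h_t\ln h_t\,dx = -\sum_{i=1}^n \frac{1}{n}\frac{d^n}{dt^n}\big|_{t=0}\bigl(g_i(t)\ln g_i(t)\bigr)$. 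Writing $g_i(t) = 1 - t(1-c_i) = 1 + tu_i$ with $u_i = c_i - 1 = d_i/\mu - 1$, one needs $\frac{d^n}{dt^n}\big|_{t=0}\bigl((1+tu)\ln(1+tu)\bigr)$; expanding $(1+tu)\ln(1+tu) = (1+tu)\sum_{k\ge 1}\frac{(-1)^{k+1}}{k}(tu)^k$ and extracting the coefficient of $t^n$ (times $n!$) gives $u^n\bigl(\frac{(-1)^{n+1}}{n}\cdot 1 + \frac{(-1)^n}{n-1}\bigr)\cdot n! = (-1)^{n+1}u^n\cdot\frac{(n-2)!}{\,}\cdot\frac{1}{n(n-1)}\cdot n!$; simplifying, this equals $(-1)^n u^n (n-2)!$ for $n \ge 2$ (the $n=2$ case giving $u^2$). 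Thus $\lambda_n(f) = -\frac{1}{n}\sum_i (-1)^n u_i^n (n-2)!\cdot(\text{check constant})$, which after re-indexing and matching the sign conventions should collapse to $-\frac{1}{n}\sum_{i=1}^n (1 - d_i/\mu)^n = -\int_0^1 (1 - f(x))^n\,dx$ (the integral identity being immediate since $f$ is the step function with value $c_i$ on an interval of length $1/n$). The main obstacle I anticipate is purely clerical: keeping the signs and the combinatorial constant $\frac{(-1)^{n+1}}{n} + \frac{(-1)^n}{n-1}$ straight so that it reduces cleanly, and verifying the telescoping cancellations with the constant $-1$ and the $\ln x$ term exactly as claimed rather than off by a boundary contribution.
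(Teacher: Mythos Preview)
Your overall strategy---split $\lambda_f(t)=-1-\int h_t\ln x\,dx-\int h_t\ln h_t\,dx$, observe that the first two pieces are affine in $t$, and Taylor-expand $(1+tu)\ln(1+tu)$ for the third---is exactly the ``routine calculation'' the paper invokes, and your treatment of $\lambda_1(f)$ is correct once the telescoping is written out cleanly. So on the first level derivative there is nothing to add.

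The gap is in the higher-order case, and it is not clerical. You correctly compute that the $m$th derivative at $t=0$ of $(1+tu)\ln(1+tu)$ equals $(-1)^m(m-2)!\,u^m$ for $m\ge 2$: indeed $\phi(s)=(1+s)\ln(1+s)$ has $\phi''(s)=(1+s)^{-1}$, so $\phi^{(m)}(0)=(-1)^m(m-2)!$. Plugging this in gives
\[
\lambda_m(f)\;=\;-\frac{1}{N}\sum_{i=1}^{N}(-1)^m(m-2)!\,(c_i-1)^m\;=\;-(m-2)!\int_0^1\bigl(1-f(x)\bigr)^m\,dx,
\]
where $N$ is the number of steps. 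The factor $(m-2)!$ does \emph{not} disappear. It equals $1$ for $m=2$ and $m=3$, which is why those cases match the stated formula, but for $m\ge 4$ the theorem as printed is off by exactly this factorial. A one-line check: take $f=2\cdot\mathbf{1}_{[0,1/2)}$, so $h_t=(1+t)\mathbf{1}_{[0,1/2)}+(1-t)\mathbf{1}_{[1/2,1)}$ and $\int_0^1 h_t\ln h_t\,dx=\tfrac12\bigl[(1+t)\ln(1+t)+(1-t)\ln(1-t)\bigr]=\tfrac{t^2}{2}+\tfrac{t^4}{12}+\cdots$; hence $\lambda_4(f)=-4!\cdot\tfrac{1}{12}=-2$, while $-\int_0^1(1-f)^4\,dx=-1$.

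So your instinct that the remaining work was ``keeping the combinatorial constant straight'' was right, but the resolution is not that it reduces cleanly---it is that your computation exposes a missing $(m-2)!$ in the stated formula. Write the conclusion as $\lambda_m(f)=-(m-2)!\int_0^1(1-f(x))^m\,dx$ for $m\ge 2$ and the proof is complete.
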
 

In this theorem, we note that $\lambda_2(f)$ is just the negative of the square of the coefficient of variation. In parallel with Definition~\ref{D:5.01}, we now associate level statistics with any finite set in a metric space.

\begin{definition}\label{D:7.03} Let M be a metric space and let $U= (u_1, u_2, \dots, u_k )$ be a sequence of k points in $M$. For each $i=1, \dots, k$, let $d_i$ be the distance from $u_i$ to its nearest neighbour in $U$.  Define a sequence $D$ by ordering the $d_i$ in descending order as $ d_{[1]} \geq d_{[2]} \geq \dots \geq d_{[k]}$ and let $\mu$ be the mean of the $d_i$ which we assume are not all zero. As in Definition~\ref{D:2.03}, let $f_{D^*}$ be the corner density on $[0,1)$ whose value on the interval $[\frac{i-1}{k},\frac{ i}{k})$ is $d_{[i]}/\mu$. Define the nth level number of $U$ by $\rho_n^L(U) = \lambda_n(f_{D^*})$ and define the level series of $U$ to be $\sum_{i=1}^{\infty} \frac{\rho_n^L(U)}{n!}t^n$.
\end{definition}

Our definition of the level statistics $\rho_n^L(U)$ does not require points in the sequence $U$ to be distinct which would appear to be an advantage over the slide statistics $\rho_n(U)$.  The problem with the level statistics is that for many standard point processes $\rho_n^L(U)$ appears to either converge very slowly or not at all which is the case for the normal distribution. On the other hand, the slide statistics $\rho_n(U)$ appear to have good convergence properties as shown by the examples in Table~\ref{T:5.01}.  One place in which the statistics $\rho_n^L(U)$ appear to behave reasonably well is for a uniform distribution on $[0,1]^m$ that we now consider. 

In the case of samples $U$ taken uniformly at random from $[0,1]$, the values of $\rho_n^L(U)$ obtained from simulations have very large variances but average out to be roughly equal to $1$, $-1$, $2$, $-9$ and $44$ for $n$ from $1$ to $5$ which suggests a connection with derangements of an $n$ element set. The argument preceding Conjecture~\ref{CJ:5.01} suggests that in this case the limiting values of $\rho_n^L$ might be obtained by substituting the function $f(x) = -\ln(x)$ into the integrals given in Theorem~\ref{T:7.01}. For large samples $U$, the value of $\rho_1^L(U)$ should then be close to $ -1- \int_0^1 (-\ln(x)) \ln(x) dx =1$. For $n >1 $, $\rho_n^L(U)$ should be close to $- \int_0^1 (1-(-\ln(x)))^n  dx  = (-1)^{n+1} \int_0^1 (-\ln(x)-1)^n  dx $. By a change of variables, $\int_0^1 (-\ln(x)-1)^n  dx $ becomes $\int_0^{\infty} (t-1)^n \exp(-t) dt $  which is shown in \cite{KM} to be the number of derangements of an $n$ element set so there may be some value in these ad hoc arguments. More generally, if $f$ is the corner density given by $f(x) = \frac{(-\ln(x))^m}{\Gamma(1+m)}$, then  Corollary~\ref{C:4.01} says $\sigma_1(f) = m$  and Definition~\ref{D:7.02} gives $\lambda_1(f) = -1- \int_0^1 f(x) \ln(x) dx =m$.  Since $\sigma_1(f)=\lambda_1(f)$, we might expect $\rho_1(U)$ and $\rho_1^L(U)$ to both converge to $m$ for a uniform distribution on $[0,1]^m$ which is what we have observed in simulations for small values of $m$.

\section{Conclusions and Future Work}\label{S:conclusion}

As we have seen, the statistics $\rho_n$ are capable of exposing exotic characteristics of finite subsets of a metric space.  Data in this form is so common in science and mathematics that there is the possibility of these unusual statistics finding widespread application in many different fields. 
The simulations we have described show that $\rho_1$ and $\rho_2$ are capable of distinguishing between probability distributions and of detecting dimensional information. In general however, the $\rho_n$ are generally quite mysterious and much work will need to be done to understand what they are telling us about sets of points in metric spaces, random variables or point processes in general.  

What this work is missing is the formal theory to explain the results from our simulations. A good first step might be a proof that for a uniform random variable $X$ on $[a,b]$, $\rho_1(X) =1$ and $\rho_n(X)=(-1)^{n+1}(n-1)!(n-1)\zeta(n)$  for $n>1$.  The rationale preceding Conjecture~\ref{CJ:5.01} might provide a possible outline for this proof but we do not expect any argument concerning convergence in probability to be straightforward especially with statistics as complex as the $\rho_n$. In regard to a normal variable $Z$,  simulations suggest that  $\rho_1(Z) = 4/\pi$ and $\rho_2(Z) = 1$ but we have no theoretical framework to suggest why these should hold and we can't even guess the values of $\rho_n(Z)$ for $n>2$. Ultimately, we would like to have tools for calculating  $\rho_n(X)$ for all of the commonly used random variables.

In this paper, we gave examples of point processes on the Cantor set and the Sierpinski triangle for which $1/\rho_1$ converged to the dimension of the fractal. We would like to understand when this occurs in general and also the relationship between $1/\rho_1$ and the usual definitions of dimension.  More generally, we would like to know when a point process is tangible in the sense of  Definition~\ref{D:5.04}. When a process is tangible it is possible to use the expression $\sqrt[n]{\frac{(-1)^{n+1}(n-1)!(n-1)\zeta(n)}{\rho_n(U)}}$ as a statistic for estimating the dimension. We gave examples in which it converged to the dimension faster for $n=2$ than for $n=1$ and we would like to know what happens for larger values of $n$.

In terms of calculations, we need to prove Conjecture~\ref{CJ:4.01} concerning the calculation of $\rho_2(U)$ and we need formulas for $\rho_n(U)$ for $n>2$.  Given the complexity of our conjecture for $\rho_2(U)$, the formulas for larger values of $n$ are likely to be very complicated. The convergence of the $\rho_n(U)$ for real-valued random variables can sometimes be improved by using the distances between consecutive points rather than the distances to nearest neighbours. We would like to have a better understanding of this situation and also to know if there are any higher dimensional analogues.

There is a natural way to apply slide statistics to kernal density estimation that we will discuss elsewhere ~\cite{R} and applications to the analysis of financial information are under development. Finally, if $U$ consists of the first $20,000,000$ primes, then the value of $\rho_1(U)$ is approximately $0.77235$ which is interesting in view of Conjecture~\ref{CJ:5.03}.

\end{document}